\newtheorem{theorem}{Theorem}[section]
\newtheorem{theorem*}{Theorem}
\newtheorem{corollary}[theorem]{Corollary}
\newtheorem{corollary*}[theorem*]{Corollary}
\newtheorem{lemma}[theorem]{Lemma}
\newtheorem{proposition}[theorem]{Proposition}
\theoremstyle{definition}
\newtheorem{definition}[theorem]{Definition}
\newtheorem{question}[theorem]{Question}
\newtheorem*{question*}{Question}
\newtheorem*{conjecture*}{Conjecture}
\newtheorem{example}[theorem]{Example}
\newtheorem*{notation*}{Notation}
\newtheorem*{claim*}{Claim}
\def\ssum{\textstyle\sum\limits}
\def\D{\mathsf{D}}
\def\k{\mathsf{k}}
\def\H{\mathcal{H}}
\def\I{\mathcal{I}}
\def\S{\mathfrak{S}}
\def\End{\operatorname{End}}
\def\Hom{\operatorname{Hom}}
\def\dim{\operatorname{dim}}
\def\mod{\operatorname{\mathsf{mod}\text{-}}\hspace{-0.03in}}
\def\proj{\operatorname{\mathsf{proj}\text{-}}\hspace{-0.03in}}
\def\Kb{\mathsf{K}^{\rm b}}
\def\soc{\mathsf{soc}\hspace{0.03in}}
\newcommand{\stautilt}{\mbox{\sf s$\tau$-tilt}\hspace{0.03in}}
\newcommand{\ctext}[1]{\raise0.2ex\hbox{\textcircled{\scriptsize{#1}}}}
\begin{document}
\setlength{\baselineskip}{16pt}

\title
[On \(\tau\)-tilting finiteness of symmetric algebras]
{On $\tau$-tilting finiteness of symmetric algebras of polynomial growth}

\author
[K. Miyamoto and Q. Wang]
{Kengo Miyamoto and Qi Wang}

\address{Department of Computer and Information Science, Graduate School of Science and Engineering, Ibaraki University, Hitachi, Ibaraki 316-8511, Japan.}
\email{kengo.miyamoto.uz63@vc.ibaraki.ac.jp}

\address{Yau Mathematical Sciences Center, Tsinghua University, Beijing 100084, China.}
\email{infinite-wang@outlook.com}

\date{\today}
\thanks{2020 {\em Mathematics Subject Classification.} 16G10, 16G60, 16D80.}
\keywords{Symmetric algebras, $0$-Hecke algebras, $0$-Schur algebras, $\tau$-tilting finite.}

\begin{abstract}
In this paper, we report on the $\tau$-tilting finiteness of some classes of finite-dimensional algebras over an algebraically closed field, including symmetric algebras of polynomial growth, $0$-Hecke algebras and $0$-Schur algebras. Consequently, we find that derived equivalence preserves the $\tau$-tilting finiteness over symmetric algebras of polynomial growth, and self-injective cellular algebras of polynomial growth are $\tau$-tilting finite. Furthermore, the representation-finiteness and $\tau$-tilting finiteness over $0$-Hecke algebras and $0$-Schur algebras (with few exceptions) coincide.
\end{abstract}

\maketitle


\section{Introduction}
Throughout this paper, we will use the symbol $\k$ to denote an algebraically closed field. An algebra is assumed to be an associative basic connected finite-dimensional $\k$-algebra, unless otherwise specified. Modules are always finitely generated and right. For an algebra $A$, we denote by $\mod A$ (resp. $\proj A$) the category of modules (resp. projective modules) over $A$. 
We also denote by $\Kb(\proj A)$ the perfect derived category of $A$.
Finally, we use the symbols $\D$ and $\tau$ for the standard $\k$-dual and the Auslander-Reiten translation, respectively.

Recently, Adachi, Iyama and Reiten proposed $\tau$-tilting theory in \cite{AIR} to classify torsion classes in $\mod A$. They introduced a new class of $A$-modules called \emph{support $\tau$-tilting modules} (see Section \ref{sect2} for the definition), which are in bijective correspondence with several sets of important objects arising in representation theory, such as two-term silting complexes in $\Kb(\proj A)$ (\cite{AIR}), functorially finite torsion classes in $\mod A$ (\cite{AIR}), left finite semibricks in $\mod A$ (\cite{Asai1}), wide subcategories of $\mod A$ (\cite{MSt}), some $t$-structures and co-$t$-structures in $\Kb(\proj A)$ (\cite{KY}), and so on. In this context, those algebras admitting only finitely many support $\tau$-tilting modules, which are said to be \emph{$\tau$-tilting finite}, have been actively researched, for example, see \cite{Ada2, AAC, Aihara-Honma, AHMW, MW, W, Z}. Demonet, Iyama, and Jasso originally studied $\tau$-tilting finite algebras and characterized such algebras in \cite{DIJ}.

It is known that $\tau$-tilting finiteness is preserved under Morita equivalence, but usually not preserved under derived equivalence. Examples are easy to find: a path algebra $A$ of a Euclidean quiver is $\tau$-tilting infinite; taking a tilting $A$-module $M$ having both a preprojective and a preinjective direct summand, then the endomorphism algebra $B=\End_A M$ is representation-finite (see \cite[VIII, Proposition 4.4]{ASS}) and hence, $\tau$-tilting finite.

The notion of symmetric algebras plays a prominent role in representation theory. An algebra $A$ is said to be \emph{symmetric} if there exists a non-degenerate symmetric $\k$-bilinear form $(-, -): A\times A\rightarrow \k$ such that $(ab, c)=(a, bc)$, for all $a,b,c \in A$. 
Classical examples of symmetric algebras include group algebras of finite groups, Brauer graph algebras, and trivial extension algebras. Here, the \emph{trivial extension} $\mathsf{Triv}(A)$ of an algebra $A$ is defined by $\mathsf{Triv}(A):=A\ltimes \D(A)$ with multiplication $(a,f)(b,g)=(ab, ag+fb)$ for $a, b \in A$, $f, g \in\D(A)$. One may easily find that each algebra $A$ is a certain quotient of its trivial extension $\mathsf{Triv}(A)$, indicating the significance of symmetric algebras in representation theory.

One may expect thorough research on the behavior of symmetric algebras in $\tau$-tilting theory. There are outcomes for some subclasses of symmetric algebras: Aihara shows in \cite{Aihara-symmetric-alg} that any representation-finite symmetric algebra is tilting-connected; Adachi, Aihara and Chan provide in \cite{AAC} a combinatorial classification of two-term tilting complexes for Brauer graph algebras; Asashiba, Mizuno and Nakashima in \cite{AMN} along with Aoki in \cite{Aoki-Brauer-tree} use different approaches to determine the number of two-term tilting complexes for Brauer tree algebras; Koshio and Kozakai classify in \cite{Koshio-Kozakai} support $\tau$-tilting modules for certain block algebras of finite groups; Li and Zhang study in \cite{LZ-trivial-extension} support $\tau$-tilting modules over trivial extension algebras; Adachi and Aoki compute in \cite{AA} the number of two-term silting complexes for symmetric algebras with radical cube zero; to name a few.

We have learned from \cite{AAC} that the $\tau$-tilting finiteness of Brauer graph algebras is preserved under derived equivalence. From this result, it is reasonable to consider the following question:
\begin{question}\label{ques1}
Does derived equivalence preserve the $\tau$-tilting finiteness over symmetric algebras?
\end{question}

In this paper, the main strategy addressing Question \ref{ques1} is given in terms of the representation type of symmetric algebras. Due to Drozd's Tame-Wild Dichotomy \cite{Drozd}, a finite-dimensional algebra over an algebraically closed field is either tame or wild. We then consider the $\tau$-tilting finiteness of symmetric algebras in different types outlined in the following hierarchy (see \cite[(1.5)]{Sk06} for more details).
\begin{center}
\scalebox{0.8}{
\begin{tikzpicture}
\draw (-3,-1.1) rectangle (6,2.3);
\draw (-2,-0.3) rectangle (5,2.2);
\draw (-1,0.5) rectangle (4,2.1);
\draw (0,1.3) rectangle (3,2) ;
\draw (1.5,1.7) rectangle (1.5,1.7) node{Finite};
\draw (1.5,0.9) rectangle (1.5,0.9) node{Domestic};
\draw (1.5,0) rectangle (1.5,0) node{Polynomial growth};
\draw (1.5,-0.8) rectangle (1.5,-0.8) node{Tame};
\end{tikzpicture}}
\end{center}
It is well-known that each of the inclusions above is proper.
We mention a crucial property from \cite{Ric89(2)} that the tameness and the hierarchy above are preserved by the derived equivalence of symmetric algebras (or more generally, self-injective algebras, see \cite{Asashiba}). 

In the case of representation-finite symmetric algebras, a positive answer to Question \ref{ques1} is straightforward, since any representation-finite algebra is $\tau$-tilting finite. As the first main result in this paper, we provide an affirmative answer to Question \ref{ques1} for representation-infinite symmetric algebras of polynomial growth, as follows.

\begin{theorem*}[{= Theorem \ref{tau-finite_PG}}]\label{main-result-1}
Let $A$ and $B$ be two representation-infinite symmetric algebras of polynomial growth. If $A$ is derived equivalent to $B$, then the following conditions are equivalent.
\begin{enumerate}
\item[\rm{(1)}] $A$ is $\tau$-tilting finite.
\item[\rm{(2)}] $B$ is $\tau$-tilting finite.
\item[\rm{(3)}] The Cartan matrix $C_A$ (or equivalently, $C_B$) is non-singular.
\end{enumerate}
\end{theorem*}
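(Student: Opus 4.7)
My plan is to prove the three-way equivalence by first reducing it to the single equivalence (1) $\Leftrightarrow$ (3) via the derived invariance of condition (3), and then verifying (1) $\Leftrightarrow$ (3) by running through the known derived-equivalence classification of representation-infinite symmetric algebras of polynomial growth.

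For the reduction, I would appeal to the classical fact that a derived equivalence between symmetric algebras induces on Grothendieck groups an isomorphism intertwining the Cartan forms up to signed permutation; in particular $|\det C_A|=|\det C_B|$. Hence condition (3) holds for $A$ if and only if it holds for $B$, so it suffices to establish (1) $\Leftrightarrow$ (3) for a single representation-infinite symmetric algebra of polynomial growth. Applying that equivalence to both $A$ and $B$ then immediately yields (1) $\Leftrightarrow$ (2).

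For the core equivalence (1) $\Leftrightarrow$ (3), I would invoke the classification of representation-infinite symmetric algebras of polynomial growth up to derived equivalence. In the domestic case, the representatives are essentially Brauer graph algebras supported on Euclidean-type graphs; here one has an explicit combinatorial formula for $\det C_A$ in terms of the multiplicities and cycles of the Brauer graph, and $\tau$-tilting finiteness can be read off from the criterion for Brauer graph algebras in \cite{AAC}, so a direct inspection shows that the two conditions match. In the non-domestic polynomial growth case, the classification produces only a short explicit list of trivial-extension/tubular-type algebras, each of which can be treated individually: when $\det C_A=0$ I would exhibit an infinite family of two-term silting complexes by iterated silting mutation, while when $\det C_A\neq 0$ I would bound the silting-mutation class by a finite computation, using the characterization of $\tau$-tilting finiteness of \cite{DIJ}.

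The principal obstacle will be the non-domestic polynomial growth case, where the coincidence between non-singularity of the Cartan matrix and $\tau$-tilting finiteness is not a priori conceptual and each algebra in the list requires an individual argument. A secondary but tractable point is to ensure that the chosen derived representatives exhaust every class, including both standard and non-standard selfinjective algebras; this follows by applying the classification results uniformly, together with the derived invariance of the tame hierarchy for symmetric algebras already recalled in the introduction.
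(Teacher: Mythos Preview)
Your overall architecture---reduce to (1)$\Leftrightarrow$(3) via derived invariance of the Cartan determinant, then verify (1)$\Leftrightarrow$(3) case by case using the classification---is the same skeleton the paper uses. However, there are two genuine gaps in how you propose to fill it in.

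First, there is a circularity in your case analysis. You say you will ``invoke the classification \ldots\ up to derived equivalence'' and check (1)$\Leftrightarrow$(3) on the derived representatives. But that only establishes (1)$\Leftrightarrow$(3) for those particular algebras, not for every algebra in the derived class; extending (1) across the derived class is exactly (1)$\Leftrightarrow$(2), which you have not yet proved. The paper closes this gap differently in the two directions. In the singular-Cartan (hence $\tau$-tilting infinite) cases it uses the \emph{Morita} classification: every algebra in the relevant derived class is, up to Morita equivalence, a trivial extension of a tilted algebra of Euclidean type (domestic case) or of a tubular algebra (non-domestic case); these base algebras have a preprojective component, hence are $\tau$-tilting infinite, and the conclusion passes up through the surjection $\mathsf{Triv}(B)\twoheadrightarrow B$. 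In the non-singular-Cartan cases the paper does not compute silting classes at all; it observes that the Cartan matrix is positive definite, that the Morita classification forces only finitely many algebras (so Cartan entries are uniformly bounded across the derived class), and then applies the reduction theorem of Eisele--Janssens--Raedschelders to conclude $\tau$-tilting finiteness for every algebra in the class simultaneously.

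Second, your claim that in the domestic case ``the representatives are essentially Brauer graph algebras'' is not correct, so the criterion of \cite{AAC} does not cover everything. Among the non-singular domestic representatives, only $A(p,q)$ and $\Lambda(m)$ are Brauer graph algebras; the family $\Gamma(n)$ (built on the three-cycle quiver $\Delta(2,2,n)$) is not special biserial, and the non-standard family $\Omega(n)$ in characteristic $2$ is not a Brauer graph algebra either. The paper handles $\Gamma(n)$ via the positive-definite Cartan plus bounded-entries argument above, and handles $\Omega(n)$ by observing it is socle equivalent to $A(1,n)$ and transferring $\tau$-tilting finiteness through that socle equivalence. Likewise, among the singular-Cartan domestic representatives, the three-branch algebras $T(p,q,r)$ are not Brauer graph algebras; the paper treats them (and the rest of the singular list) uniformly via the trivial-extension argument, not via \cite{AAC}.
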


It is noteworthy that Question \ref{ques1} remains unanswered for tame symmetric algebras of non-polynomial growth and wild symmetric algebras.

As an application of Theorem \ref{main-result-1}, one can determine the $\tau$-tilting finiteness of self-injective cellular algebras of polynomial growth, which are classified in \cite{AKMW} up to Morita equivalence (under the assumption that the characteristic of $\k$ is not $2$).

\begin{corollary*}[{= Corollary \ref{finite_cellular}}]
Suppose the characteristic of $\k$ is not $2$. Then, any algebra which is Morita equivalent to a self-injective cellular algebra of polynomial growth is $\tau$-tilting finite.
\end{corollary*}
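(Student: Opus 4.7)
The plan is to combine the classification of self-injective cellular algebras of polynomial growth from \cite{AKMW} with Theorem \ref{main-result-1}. First I would recall that a self-injective cellular algebra is in fact symmetric (this is a standard consequence of the defining anti-involution of the cellular structure together with self-injectivity), so every algebra under consideration falls within the hypotheses of Theorem \ref{main-result-1}. If the algebra happens to be representation-finite, then it is automatically $\tau$-tilting finite, so I may assume representation-infiniteness. Under this assumption, Theorem \ref{main-result-1} reduces the corollary to a single verification: the Cartan matrix $C_A$ is non-singular.

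Next I would exploit the cellular structure to control the Cartan matrix. For a cellular algebra, one has the factorization $C_A = D^{T} D$, where $D$ is the decomposition matrix whose rows are indexed by the cell modules and whose columns are indexed by the simple modules. For self-injective cellular algebras, the set of simple modules is in natural bijection with the set of cell modules (every cell module has a simple head and no two distinct cell modules share the same head in the self-injective setting), so $D$ is a square integer matrix and
\[
\det C_A \;=\; (\det D)^{2}.
\]
Hence $C_A$ is non-singular precisely when $D$ is non-singular.

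I would then invoke the classification in \cite{AKMW} of self-injective cellular algebras of polynomial growth up to Morita equivalence. This produces a short, explicit list of families (essentially certain Brauer tree/graph-type algebras together with a few exceptional families). For each family on this list, I would exhibit the decomposition matrix $D$ and check, in most cases by a direct inspection of its shape (often block-triangular with invertible blocks), that $\det D \neq 0$. For the representation-finite entries of the classification there is nothing to prove; for each representation-infinite entry, the non-singularity of $D$ combined with the previous paragraph yields $\det C_A \neq 0$, and Theorem \ref{main-result-1} concludes $\tau$-tilting finiteness.

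The main obstacle I expect is the case-analysis in the final step: one must locate the explicit cellular data on each family appearing in the \cite{AKMW} classification and verify non-singularity of $D$ uniformly. The conceptual point that $\det C_A = (\det D)^2$ is a square of an integer is what makes this tractable, because one only needs to rule out the degenerate situation $\det D = 0$, rather than compute $\det C_A$ exactly; any family in which cell modules are pairwise linked by a unidirectional composition pattern will automatically give a non-singular $D$, and I expect the exceptional families to be dispatched by direct inspection.
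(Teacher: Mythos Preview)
Your reduction to the non-singularity of $C_A$ via Theorem \ref{main-result-1} is exactly what the paper does. However, your execution of that last step contains a genuine error and is far more complicated than necessary.

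The error is your claim that for a self-injective cellular algebra the decomposition matrix $D$ is square because ``the set of simple modules is in natural bijection with the set of cell modules.'' This is false. For a cellular algebra the simples are indexed by a proper subset $\Lambda_0\subseteq\Lambda$ of the cell poset in general, and self-injectivity does not force $\Lambda_0=\Lambda$. A concrete counterexample is $\mathsf{k}\mathfrak{S}_3$ in characteristic $3$: it is symmetric and cellular, has three cell (Specht) modules but only two simples, so $D$ is $3\times 2$. Consequently your formula $\det C_A=(\det D)^2$ is not available, and the proposed case-by-case verification of $\det D\neq 0$ through the \cite{AKMW} list does not even get off the ground as stated.

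More importantly, none of this case analysis is needed. The paper simply invokes \cite[Theorem~3.7]{GL}: the Cartan matrix of \emph{every} cellular algebra is non-singular. This is a general structural fact, essentially because in the factorisation $C_A=D^{T}D$ the $\Lambda_0\times\Lambda_0$ submatrix of $D$ is unitriangular, so by Cauchy--Binet $\det C_A\geq 1$. Combined with the fact (\cite{KX}) that a self-injective cellular algebra is weakly symmetric, hence symmetric under the standing hypothesis $\operatorname{char}\mathsf{k}\neq 2$, the corollary follows immediately from Theorem \ref{tau-finite_PG}. You had the right factorisation $C_A=D^{T}D$ in hand; the missing observation is that it already forces non-singularity without any appeal to the classification.
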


Another motivation of this paper is to determine the $\tau$-tilting finiteness of $0$-Hecke algebras and $0$-Schur algebras, see Section \ref{section-4} for precise definitions. This may be independent of Question \ref{ques1}, but this is meaningful for the research related to Hecke algebras and $q$-Schur algebras. 
In fact, the $\tau$-tilting finiteness of Hecke algebras of type $\mathbb{A}$ and classical Schur algebras has already been determined in  \cite{ALS-Hecke-alg, AW, W2}.

Let $q$ be an indeterminate element. For a finite Coxeter system $(W,S)$, the \emph{Iwahori-Hecke algebra} $\H_{\k,q}(W)$ is the $\k$-algebra generated by $\{T_{s} \mid s\in S\}$ with quadratic relations and braid relations. Then,  $\H_0(W):=\H_{\k,0}(W)$ is called the \emph{$0$-Hecke algebra}. The structure and the representation theory of $\H_0(W)$ have been extensively studied, for example, see \cite{carter-0-hecke-alg, Fayers-0-hecke-alg, Norton-0-hecke-alg}. As the second main result in this paper, we classify $0$-Hecke algebras in terms of $\tau$-tilting finiteness as follows.

\begin{theorem*}[{= Theorem \ref{result-0-Hecke}}]\label{main3}
Let $W$ be an irreducible finite Coxeter group. Then, the following statements are equivalent.
\begin{enumerate}
\item[\rm{(1)}] The $0$-Hecke algebra $\H_0(W)$ is $\tau$-tilting finite.
\item[\rm{(2)}] $W$ is one of types $\mathbb{A}_1$, $\mathbb{A}_2$, $\mathbb{B}_2$ and $\mathbb{I}_2(m)$, for any $m\geq 5$.
\end{enumerate}
\end{theorem*}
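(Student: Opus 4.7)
The argument splits into two directions: verifying $\tau$-tilting finiteness for the listed types, and ruling out every other irreducible finite Coxeter group.

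For the implication (2) $\Rightarrow$ (1), each listed type has rank at most two, so $\mathcal{H}_0(W)$ has at most $2^{|S|} \leq 4$ isomorphism classes of simple modules. The case $W = A_1$ gives a semisimple algebra, and the remaining cases all fall under the dihedral family $W = I_2(m)$ (since $A_2 = I_2(3)$ and $B_2 = I_2(4)$). The plan is to present the basic algebra of $\mathcal{H}_0(I_2(m))$ by a quiver with four vertices indexed by subsets of $S = \{s,t\}$, together with relations coming from $T_s^2 = -T_s$, $T_t^2 = -T_t$, and the length-$m$ braid relation. We expect a uniform string-algebra--type description, from which representation-finiteness---and hence $\tau$-tilting finiteness---follows.

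For the implication (1) $\Rightarrow$ (2), we must show $\tau$-tilting infiniteness of $\mathcal{H}_0(W)$ for every irreducible $W$ of rank at least $3$. The main tool is parabolic reduction: whenever $W_J$ is a standard parabolic subgroup of $W$, the algebra $\mathcal{H}_0(W_J)$ is realised as an idempotent truncation $e_J \mathcal{H}_0(W) e_J$, and by \cite{DIJ} the property of being $\tau$-tilting finite descends to such truncations. Hence it suffices to establish $\tau$-tilting infiniteness of $\mathcal{H}_0(W)$ on a short list of rank-three types arising as parabolic subgroups of every other irreducible $W$ of rank $\geq 3$. Inspecting the Coxeter diagrams, one may take the list $\{A_3, B_3, H_3\}$: type $A_3$ appears as a parabolic in $A_n$ ($n \geq 3$), $D_n$ ($n \geq 4$), and $E_6, E_7, E_8$; type $B_3$ appears in $B_n$ ($n \geq 3$) and $F_4$; and type $H_3$ appears in $H_3$ and $H_4$.

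For each of the three critical rank-three cases, the plan is to determine the Gabriel quiver and relations of $\mathcal{H}_0(W)$ explicitly---using Norton's description of the projective modules together with direct computation from the defining relations---and then exhibit $\tau$-tilting infiniteness, either by identifying a $\tau$-tilting infinite factor algebra (for instance, a radical-square-zero algebra built on a non-Dynkin quiver) or by producing an infinite family of pairwise non-isomorphic two-term presilting complexes. The chief obstacle is expected to be type $H_3$: the basic algebra has $2^3 = 8$ simples and its defining relations are considerably complicated by the order-five braid relation. By contrast, the cases $A_3$ and $B_3$ are smaller and should reduce to algebras whose $\tau$-tilting infiniteness has already been treated, cf.\ \cite{AHMW, W}.
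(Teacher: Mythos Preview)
Your plan contains a genuine gap in the $(1)\Rightarrow(2)$ direction, and also misses the simplification that makes the paper's proof short and uniform.

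\textbf{The gap.} Your parabolic reduction step asserts that $\mathcal{H}_0(W_J)$ is realised as an idempotent truncation $e_J\mathcal{H}_0(W)e_J$, but you give no argument for this and it is far from obvious. The parabolic $0$-Hecke algebra is naturally a \emph{subalgebra} of $\mathcal{H}_0(W)$ (generated by $T_s$ for $s\in J$), and subalgebras are not in general corner algebras; an idempotent truncation can acquire new arrows in its Gabriel quiver from paths passing through the deleted vertices, so there is no reason to expect $e\mathcal{H}_0(W)e\cong\mathcal{H}_0(W_J)$ for any choice of $e$. Without this identification your reduction to $\{A_3,B_3,H_3\}$ collapses: the only thing one gets for free is that a suitable \emph{quotient} of $\mathcal{H}_0(W)$ has the same quiver as $\mathcal{H}_0(W_J)$, but possibly different relations---and your downstream plan relies precisely on computing the relations of $\mathcal{H}_0(A_3)$, $\mathcal{H}_0(B_3)$, $\mathcal{H}_0(H_3)$.

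\textbf{What the paper does.} The paper never computes relations and never separates $H_3$ from the other cases. The Gabriel quiver $Q_W$ of $\mathcal{H}_0(W)$ is known explicitly from \cite{Norton-0-hecke-alg,Fayers-0-hecke-alg}: vertices are subsets $J\subseteq S$, with an arrow $v_J\to v_K$ whenever neither of $J,K$ contains the other and $m_{jk}\geq 3$ for all $j\in J\setminus K$, $k\in K\setminus J$. Since the arrow condition is $m_{jk}\geq 3$ rather than $=3$, the quiver depends only on the underlying unlabelled Coxeter graph. Any irreducible diagram of rank $\geq 3$ contains a path $s_1\text{--}s_2\text{--}s_3$, and the full subquiver of $Q_W$ on $v_{\{1\}},v_{\{2\}},v_{\{3\}},v_{\{1,2\}},v_{\{1,3\}},v_{\{2,3\}}$ is then exactly the quiver $\Delta_2$ of Lemma~\ref{tau-tilt-infinite-quiver}. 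That lemma asserts that \emph{every} bound quiver algebra on $\Delta_2$ is $\tau$-tilting infinite, so passing to the quotient of $\mathcal{H}_0(W)$ by the remaining idempotents finishes uniformly. The same quiver description handles $(2)\Rightarrow(1)$: in rank two the only non-simple block has quiver $\circ\rightleftarrows\circ$, and any such algebra is representation-finite by \cite{BG}, so no analysis of the braid relations is needed there either.
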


We denote by $\S_r$ the symmetric group of degree $r$, which is actually a Coxeter group of type $\mathbb{A}_{r-1}$. 
According to \cite[(4.3.3)]{DY-0-Schur-alg}, the \emph{$0$-Schur algebra} $S_0(n,r)$ is Morita equivalent to a certain idempotent truncation of the $0$-Hecke algebra $\H_0(\S_r)$, for any $n, r\geq 2$. 
This class of algebras has been studied by various authors, such as Donkin \cite{dokin-q-schur-alg}, Su \cite{Su-0-schur-alg},  Deng and Yang \cite{DY-0-Schur-alg}, etc. As the third main result in this paper, we determine the $\tau$-tilting finiteness of $0$-Schur algebras as follows.

\begin{theorem*}[{= Theorem \ref{main-0-schur}}]\label{main4}
The following assertions hold.
\begin{enumerate}
\item[\rm{(1)}] For $n\geq 3$, the $0$-Schur algebra $S_0(n,r)$ is $\tau$-tilting finite if and only if $r=2,3$.
\item[\rm{(2)}] For $n=2$ and $r\geq 2$, the $0$-Schur algebra $S_0(2,r)$ is $\tau$-tilting finite.
\end{enumerate}
\end{theorem*}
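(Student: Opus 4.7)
The plan is to leverage the Morita equivalence $S_0(n,r) \sim e\mathcal{H}_0(\mathfrak{S}_r)e$ from \cite{DY-0-Schur-alg} together with two standard stability properties of $\tau$-tilting finiteness: it is preserved by Morita equivalence and by the corner operation $A \mapsto eAe$, and it coincides for $A$ and $A^{op}$ by \cite{AIR}. With these tools, the task reduces to locating suitable corner (or quotient) algebras of $S_0(n,r)$ whose $\tau$-tilting (in)finiteness is already controlled by Theorem \ref{result-0-Hecke}.

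For the $\tau$-tilting finite direction of (1) (i.e.\ $r\leq 3$), the Morita equivalence alone is enough: $\mathcal{H}_0(\mathfrak{S}_r)$ is $\tau$-tilting finite by Theorem \ref{result-0-Hecke} since $\mathfrak{S}_r$ is of type $A_1$ or $A_2$, so any idempotent truncation $e\mathcal{H}_0(\mathfrak{S}_r)e$, and hence $S_0(n,r)$, is $\tau$-tilting finite. For the infinite direction ($r\geq 4$, $n\geq 3$), I would first dispatch the subcase $n\geq r$. Here the composition $\lambda_0=(1^r, 0^{n-r})\in\Omega(n,r)$ satisfies $x_{\lambda_0}=1$, so $\mathcal{H}_0(\mathfrak{S}_r)$ itself is a direct summand of $\bigoplus_{\lambda} x_\lambda \mathcal{H}_0(\mathfrak{S}_r)$; the corresponding corner of $S_0(n,r)$ is therefore $\mathcal{H}_0(\mathfrak{S}_r)^{op}$, which is $\tau$-tilting infinite by Theorem \ref{result-0-Hecke} and the left-right symmetry of $\tau$-tilting finiteness. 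For the remaining subcase $3\leq n<r$, I would use the inclusion $\Omega(3,r)\hookrightarrow\Omega(n,r)$ obtained by padding with zeros to realize $S_0(3,r)$ as a corner of $S_0(n,r)$, thereby reducing to the single assertion that $S_0(3,r)$ is $\tau$-tilting infinite for every $r\geq 4$.

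The main obstacle is therefore the $\tau$-tilting infiniteness of $S_0(3,r)$ for $r\geq 4$. Here I would exploit the explicit quiver-with-relations description of the basic algebra of $S_0(n,r)$ from \cite{DY-0-Schur-alg} to exhibit either an idempotent-truncation corner or a bound-path-algebra quotient of $S_0(3,r)$ that matches a known $\tau$-tilting infinite piece, for example a Kronecker-type hereditary section or one of the minimal $\tau$-tilting infinite models classified in \cite{AHMW, W, MW}. After handling a base case such as $S_0(3,4)$ by direct computation, I would extend to arbitrary $r\geq 4$ either by a uniform construction that plants the same bad subquiver into $S_0(3,r)$ for every $r$, or by an inductive chain of corner/quotient reductions as $r$ grows.

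Part (2) treats $S_0(2,r)$, where $\Omega(2,r)=\{(i,r-i) : 0\leq i\leq r\}$ has only $r+1$ elements. In analogy with the classical $q$-Schur situation, I expect $S_0(2,r)$ to be a Nakayama algebra, which is automatically representation-finite and hence $\tau$-tilting finite. Concretely, I would compute the corner $e\mathcal{H}_0(\mathfrak{S}_r)e$ with $e=\sum_{i=0}^{r} e_{(i,r-i)}$ directly from the quadratic and braid relations at $q=0$, read off its quiver and relations, and verify that the indecomposable projectives are uniserial, which closes part (2) without further input.
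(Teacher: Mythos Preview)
Your approach to part (1) works but is more circuitous than necessary. The paper uses Lemma~\ref{0-schur-alg-reduction} directly: since $e[n] = \sum_{\sharp(J) \leq n-1} e_J$ and $n \geq 3$, the idempotent $e[n]$ retains every vertex $v_J$ with $\sharp(J) \leq 2$. For $r \geq 4$ the six vertices $v_{\{1\}}, v_{\{2\}}, v_{\{3\}}, v_{\{1,2\}}, v_{\{1,3\}}, v_{\{2,3\}}$ then span a full subquiver of shape $\Delta_2$ in the Gabriel quiver of $e[n]\mathcal{H}_0(\mathfrak{S}_r)e[n]$, and Lemma~\ref{tau-tilt-infinite-quiver} finishes uniformly in $n$ and $r$---no case split on $n \geq r$ versus $n < r$, no separate reduction to $S_0(3,r)$, no induction. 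Your route would reach the same subquiver eventually, but the detour through the opposite algebra and the vaguely specified inductive extension is avoidable once you observe that $e[n]$ already keeps all subsets of size at most two.

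Part (2), however, contains a genuine gap: your expectation that $S_0(2,r)$ is a Nakayama algebra is false. By \cite[Section~5]{DY-0-Schur-alg}, $S_0(2,r)$ is Morita equivalent to the preprojective algebra $\Pi(\mathbb{A}_{r-1})$. Already for $r=4$ the indecomposable projective at the middle vertex of $\Pi(\mathbb{A}_3)$ has $\mathrm{rad}\,P/\mathrm{rad}^2 P \cong S_1 \oplus S_3$ and is therefore not uniserial; for $r \geq 6$ the algebra $\Pi(\mathbb{A}_{r-1})$ is representation-infinite, so no argument via uniseriality or representation-finiteness can possibly close the case. The paper instead invokes Mizuno's theorem \cite[Theorem~2.21]{Miz1} that preprojective algebras of Dynkin type are $\tau$-tilting finite, and this is precisely the missing input you need.
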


\section{Preliminaries}\label{sect2}
Any finite-dimensional $\k$-algebra $A$ can be presented as a bound quiver algebra $\k Q/\I$, where $Q$ is a finite (connected) quiver and $\I$ is an admissible ideal of the path algebra $\k Q$. A quiver is said to be \emph{acyclic} if it admits no oriented cycle. We will commonly write $a \xrightarrow{\alpha} b$ to indicate that an arrow $\alpha$ has source $a$ and target $b$. We refer to some textbooks, e.g., \cite{ASS}, for more background materials on the representation theory of quivers. 

We recall some crucial properties of symmetric algebras. It is well-known that any algebra that is derived equivalent to a symmetric algebra is also symmetric, see \cite[Corollary 5.3]{Ric91}.
Moreover, the following fact is the reason why we can consider symmetric algebras in different hierarchies of representation types.
\begin{proposition}[{\cite[Corollary 2.2]{Ric89(2)} and \cite[Corollary 2]{KZ}}]\label{sym_hiera}
Let $A$ be a symmetric algebra and $B$ an algebra that is derived equivalent to $A$. If $A$ is tame (respectively, representation-finite, domestic, of polynomial growth), then $B$ has the same property.
\end{proposition}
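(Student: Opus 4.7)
The plan is to reduce the proposition to the two results flagged in its attribution. First, since $A$ is symmetric and derived equivalence preserves symmetry \cite[Corollary 5.3]{Ric91}, the algebra $B$ is also symmetric, and in particular both $A$ and $B$ are selfinjective. I can therefore invoke Rickard's theorem \cite[Corollary 2.2]{Ric89(2)}: any derived equivalence between selfinjective algebras lifts to a stable equivalence of Morita type, giving a triangle equivalence $\underline{\mathsf{mod}}\text{-}A \simeq \underline{\mathsf{mod}}\text{-}B$ realized by tensoring with a suitable bimodule. This step is essentially formal once $B$ is known to be selfinjective.

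Next, I will apply the theorem of Krause and Zwara \cite[Corollary 2]{KZ}, which asserts that each of the representation-type classes appearing in the displayed hierarchy — representation-finite, domestic, polynomial growth, and tame — is preserved under stable equivalences of Morita type between selfinjective algebras. Their argument proceeds by observing that each such class is characterized by the asymptotic behaviour of the number of one-parameter families of indecomposable modules of each dimension, and this counting data can be read off from the stable category once the (finitely many) indecomposable projectives are set aside. Thus every property in question is in fact a stable invariant for selfinjective algebras.

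The main obstacle, if any, lies in the second step: one needs the Krause--Zwara invariance to apply uniformly to the entire chain of four classes, not merely to tameness itself. In practice this is precisely the content of their Corollary 2, and no further argument is required. The remaining work after citing the two theorems is only bookkeeping: transport the chosen property from $A$ to $B$ along the stable equivalence of Morita type produced in the first step, and conclude that $B$ belongs to the same level of the hierarchy as $A$.
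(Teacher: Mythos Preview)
Your proposal is correct and matches the paper's own treatment: the paper gives no proof beyond the attribution to \cite[Corollary 2.2]{Ric89(2)} and \cite[Corollary 2]{KZ}, and your argument is precisely the standard unpacking of how those two results combine (derived equivalence of selfinjective algebras $\Rightarrow$ stable equivalence of Morita type $\Rightarrow$ preservation of the representation-type hierarchy).
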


We then review the basic definitions in $\tau$-tilting theory and collect some results on $\tau$-tilting finite algebras, which are needed for this paper. We refer to \cite{AIR, DIJ, DIRRT, EJR-reduction} for more aspects related to $\tau$-tilting theory.

\begin{definition}
Let $M\in \mod A$ and $|M|$ be the number of isomorphism classes of indecomposable direct summands of $M$.
\begin{enumerate}
\item[\rm{(1)}] $M$ is \emph{$\tau$-rigid} if $\Hom_A(M,\tau M)=0$.
\item[\rm{(2)}] $M$ is \emph{$\tau$-tilting} if it is $\tau$-rigid with $|M|=|A|$.
\item[\rm{(3)}] $M$ is \emph{support $\tau$-tilting} if there is an idempotent $e\in A$ such that $M$ is a $\tau$-tilting module over $A/AeA$.
\end{enumerate}
\end{definition}

We denote by $\stautilt A$ the set of isomorphism classes of basic support $\tau$-tilting $A$-modules; it admits a poset structure, see \cite[Subsection 2.4]{AIR} for details.
We recall that $A$ is called \emph{$\tau$-tilting finite} if $\stautilt A$ is a finite set, and otherwise, $A$ is said to be \emph{$\tau$-tilting infinite}.
According to \cite{DIJ}, an algebra $A$ is $\tau$-tilting finite if and only if $A$ has only finitely many isomorphism classes of bricks.

\begin{example}\label{ex1} 
We give some examples of $\tau$-tilting finite/infinite algebras.
\begin{enumerate}
\item[\rm(1)] A local algebra $A$ has exactly two basic support $\tau$-tilting modules $A$ and $0$. Thus, any local algebra is $\tau$-tilting finite.

\item[\rm(2)] A representation-finite algebra is $\tau$-tilting finite.

\item[\rm(3)] Any module lying on a preprojective (or preinjective) component is a brick. Thus, representation-infinite algebras with preprojective components are $\tau$-tilting infinite (see \cite[Remark 2.8]{Mo}). Moreover, the path algebra $\k Q$ of an acyclic quiver $Q$ is $\tau$-tilting finite if and only if $Q$ is a Dynkin quiver (see \cite[Theorem 2.6]{Ada2}).
\end{enumerate}
\end{example}

Let $\I$ be a two-sided ideal of $A$ and $e$ an idempotent of $A$. By considering the following fully faithful functors
\[ 
-\otimes_{A/\I} A/\I: \mod (A/\I) \to\mod A, \quad \Hom_{eAe}(Ae, - ):\mod eAe \to \mod A, 
\]
we have the fact that $\tau$-tilting finiteness is preserved under taking quotient (or factor) and idempotent truncation, see \cite{DIJ, DIRRT, Pl19}.

The \emph{Cartan matrix} $C_A$ of an algebra $A=\k Q/\I$ is the matrix whose $(i,j)$-entry is $\dim_{\k}\Hom_A(P_i,P_j)$, where $P_i$ is the indecomposable projective $A$-module associated with the vertex $i$ in $Q$. If the determinant of $C_A$ is equal to zero, it is referred to as a \emph{singular matrix}; otherwise, it is termed as a \emph{non-singular matrix}. If $A$ is symmetric, then $C_A$ is a symmetric matrix and furthermore, $C_A$ is said to be \emph{positive definite} if all eigenvalues of $C_A$ are positive. The following statement is a well-known fact, e.g., see \cite{Ric89}.

\begin{proposition}\label{cartan-derived}
Let $A$ be a symmetric algebra and $B$ an algebra which is derived equivalent to $A$. If the Cartan matrix $C_A$ is positive definite, then so is $C_B$.
\end{proposition}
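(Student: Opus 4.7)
The plan is to show that under a derived equivalence between symmetric algebras, the Cartan matrices are related by a congruence transformation $C_B = P^{T} C_A P$ with $P$ invertible over $\mathbb{Z}$, and then invoke the fact that congruence preserves positive definiteness.

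First I would fix a tilting complex $T = T_1 \oplus \cdots \oplus T_n$ in $\mathsf{K^b}(\mathsf{proj}\text{-}A)$ realizing the derived equivalence, so that $B \simeq \mathsf{End}_{\mathsf{K^b}(\mathsf{proj}\text{-}A)}(T)^{\mathrm{op}}$; the indecomposable projective $B$-module $Q_i$ corresponds to $T_i$. Since $T$ generates $\mathsf{K^b}(\mathsf{proj}\text{-}A)$ as a thick subcategory, the classes $[T_1],\ldots,[T_n]$ form a $\mathbb{Z}$-basis of $K_0(\mathsf{K^b}(\mathsf{proj}\text{-}A))$. Expanding
\[
[T_i] \;=\; \sum_{j} p_{ji}\,[P_j]
\]
with $P_1,\ldots,P_n$ the indecomposable projective $A$-modules yields an integer matrix $P=(p_{ji})$ that is invertible over $\mathbb{Z}$.

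The next step is to interpret Cartan entries via the Euler form
\[
\chi(X,Y) \;=\; \sum_{i \in \mathbb{Z}} (-1)^i \dim_{\mathsf{k}} \mathsf{Hom}_{\mathsf{D^b}(\mathsf{mod}\text{-}A)}(X, Y[i]).
\]
Because $T$ is tilting, $\mathsf{Hom}(T_i,T_j[k])=0$ for $k\neq 0$, so $\chi(T_i,T_j) = \dim_{\mathsf{k}}\mathsf{Hom}_B(Q_i,Q_j)$, which is exactly the $(i,j)$-entry of $C_B$. On the other hand, $\chi(P_k,P_l) = \dim_{\mathsf{k}}\mathsf{Hom}_A(P_k,P_l) = (C_A)_{kl}$, since higher Ext vanishes on projectives. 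Bilinearity of $\chi$ together with the expansion above gives
\[
(C_B)_{ij} \;=\; \chi(T_i,T_j) \;=\; \sum_{k,l} p_{ki}\,(C_A)_{kl}\,p_{lj},
\]
that is, $C_B = P^{T} C_A P$.

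Finally, positive definiteness is preserved under congruence by invertible real matrices: for any nonzero $x\in\mathbb{R}^n$, $x^{T} C_B x = (Px)^{T} C_A (Px) > 0$ since $Px\neq 0$. Hence $C_A$ positive definite forces $C_B$ positive definite. The main obstacle is really the identification $(C_B)_{ij} = \chi(T_i,T_j)$; this is where the symmetry of $A$ enters (ensuring that $\chi$ is well-behaved and symmetric on projective classes and that the Nakayama functor on $\mathsf{proj}\text{-}A$ is the identity), and it is also the place where one must be careful to verify that all higher $\mathsf{Hom}$'s between projectives indeed vanish.
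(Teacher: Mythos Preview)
Your argument is correct and follows essentially the same route as the paper: both use the Euler form on $K_0(\mathsf{K^b}(\mathsf{proj}))$, observe that it is represented by the Cartan matrix on the basis of indecomposable projectives, and conclude that a derived equivalence yields a congruence $C_B = P^{T} C_A P$ with $P$ invertible. One small remark: the identification $(C_B)_{ij}=\chi(T_i,T_j)$ does not actually require $A$ to be symmetric---the Euler form and the congruence relation hold for any finite-dimensional algebra---so your closing comment overstates the role of the symmetry hypothesis.
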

\begin{proof}
For an algebra $\Lambda$, we denote by $\mathsf{K_0}(\proj \Lambda)$ and $\mathsf{K_0}(\Kb(\proj \Lambda))$ the Grothendieck groups of $\proj \Lambda$ and $\Kb(\proj \Lambda)$, respectively. Then it is well-known that the canonical embedding $\proj \Lambda \to \Kb(\proj \Lambda)$ induces an isomorphism $\mathsf{K_0}(\proj \Lambda) \xrightarrow{\ \sim\ }\mathsf{K_0}(\Kb(\proj \Lambda))$ (e.g., see \cite{Gr72}).
For a complex $M$ in $\Kb(\proj \Lambda)$, we write $[M]$ for the equivalence class of $M$ in the Grothendieck group $\mathsf{K_0}(\Kb(\proj \Lambda))$. Let $\langle -,- \rangle_\Lambda: \mathsf{K_0}(\Kb(\proj \Lambda))\to \mathbb{Z}$ be the bilinear form defined by
\[ 
\langle [M],[N] \rangle_\Lambda= \ssum_{\ell=0}^\infty (-1)^{\ell}\dim_\k \Hom_{\Lambda}([M],\Sigma^{\ell}[N]), 
\]
where $\Sigma$ is the shift functor on $\Kb(\proj \Lambda)$.
It determines a bilinear form $\langle -,- \rangle_\Lambda: \mathsf{K_0}(\proj \Lambda)\to \mathbb{Z}$, which is represented by $C_\Lambda$ with respect to the basis obtained by the isomorphism classes of indecomposable projective $\Lambda$-modules.

Let $A$ be a symmetric algebra. If $A$ and $B$ are derived equivalent, then it follows from \cite[Theorem 6.4]{Ric89} that there exists a triangle equivalence $F:\Kb(\proj A)\to \Kb(\proj B)$. The triangle equivalence induces an isomorphism $\mathsf{K_0}(\Kb(\proj A))\to \mathsf{K_0}(\Kb(\proj B))$ which preserves the bilinear forms.
Indeed, for $[M], [N]\in \mathsf{K_0}(\Kb(\proj A))$, we have
\begin{align*}
\langle F[M], F[N] \rangle_B & = \ssum_{\ell=0}^\infty(-1)^{\ell}\dim_\k \Hom_{B}(F[M],\Sigma^{\ell}(F[N])) \\
& = \ssum_{\ell=0}^\infty(-1)^{\ell}\dim_\k \Hom_{B}(F[M],F(\Sigma^{\ell}[N])) \\
 & = \ssum_{\ell=0}^\infty(-1)^{\ell}\dim_\k \Hom_{A}([M],\Sigma^{\ell}[N]) \\
& = \langle [M], [N] \rangle_A.
\end{align*}
This implies that there is an invertible matrix $P$ such that $C_B={}^{t}PC_AP$, where ${}^tP$ is the transpose of $P$. Therefore, if $C_A$ is positive definite, then so is $C_B$. 
\end{proof}

\section{Symmetric algebras of polynomial growth}
As mentioned in the introduction, we only focus on representation-infinite symmetric algebras of polynomial growth. 
We display by Table \ref{fig1} (originally from \cite{Sk06}) the complete classifications of these algebras up to Morita and derived equivalences, in which one may refer to the corresponding reference for the definitions of relevant algebras. 
Here, we follow the convention in \cite[(1.8)]{Sk06} that an algebra $A$ is called \emph{standard} if there exists a Galois covering $\hat{A}\to \hat{A}/G=A$ such that $\hat{A}$ is a simply connected locally bounded category, and $G$ is an admissible torsion-free group of $\k$-linear automorphisms of $\hat{A}$. 
In the table, \textbf{D} stands for representation-infinite domestic and \textbf{PG} stands for non-domestic polynomial growth.
\begin{table}[htp]
\begin{center}
\renewcommand\arraystretch{1.2}
\begin{tabular}{c||c|c|c|c}
 & \fbox{Standard}  & \fbox{Cartan matrix}  & \fbox{Morita equivalence} & \fbox{Derived equivalence}\\ \hline\hline
\multirow{8}*{\textbf{D}}  &\multirow{7}*{Yes}             &\multirow{3}*{Singular} & \text{Trivial extensions of} & $T(p,q), T(2,2,r)^\ast$,  \\
                                &  &  &\text{Euclidean algebras} & $T(3,3,3), T(2,4,4)$,\\
                      &&&(\cite{BoS05})&  $T(2,3,6)$ \text{(\cite{HS})}\\ \cline{3-5}
                                                               &                        &\multirow{4}*{Non-singular} & $\Lambda(T,v_1,v_2), \Lambda'(T),$  &  \\
                                                               &                              &                                        &                              $\Gamma^{(0)}(T,v),\Gamma^{(1)}(T,v)$,& $A(p,q), \Lambda(m), \Gamma(n)$\\
                                                            &                              &                                        &   $\Gamma^{(2)}(T,v_1,v_2)$ & \text{ (\cite{HS})} \\
                                                            &&& \text{ (\cite{BoS05})}\\ \cline{2-5}

                                                            &No    & \text{Non-singular} & $\Omega(T)$ \text{(\cite{BoS06})} & $\Omega(n)$ \text{(\cite{BHS07})}\\ \hline \hline
\multirow{7}*{\textbf{PG}}&\multirow{5}*{Yes}              &\multirow{3}*{Singular} & \text{Trivial extensions of} & \text{Trivial extensions of}\\
&&&\text{tubular algebras}& \text{canonical tubular algebras}\\
&&&\text{(\cite{BS03})} & \text{(\cite{Sk06})} \\ \cline{3-5}
                                                               &                        &\multirow{2}*{Non-singular}& $\{A_i\mid i=1,2,\ldots, 16\}$& $\{A_i\mid i=1,2,3,4,5,12\}$ \\
        &&&  \text{(\cite{BS03, BHS04})} &                                       \text{(\cite{BiHS03})}            \\\cline{2-5}

                                                             &\multirow{2}*{No}   &\multirow{2}*{Non-singular} & $\{\Lambda_i\mid i=1,2,\ldots,9\}$& $\{\Lambda_i\mid i=1,3,4,9\}$\\
                                                             &         & &     \text{(\cite{BS04})} & \text{(\cite{BiHS03(2)})}\\ \cline{1-5}
\end{tabular}
\end{center}
\caption{Representation-infinite symmetric algebras of polynomial growth}\label{fig1}
\end{table}

\subsection{Reduction methods}
We need the following two crucial lemmas.

\begin{lemma}\label{sym_tau_fin}
Let $A$ be a symmetric algebra whose Cartan matrix $C_A$ is positive definite. Assume that the entries of the Cartan matrix $C_B$ are bounded for any algebra $B$ which is derived equivalent to $A$, that is, there is a positive integer $m$ such that all entries of $C_B$ are less than $m$. Then, all algebras which are derived equivalent to $A$ are $\tau$-tilting finite.
\end{lemma}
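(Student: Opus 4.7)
The plan is to translate $\tau$-tilting finiteness of each $B$ derived equivalent to $A$ into finiteness of the $g$-vectors of indecomposable two-term presilting complexes in $\mathsf{K^b}(\mathsf{proj}\text{-}B)$, then to confine those $g$-vectors to a finite set by combining positive definiteness of the Cartan form $\langle-,-\rangle_B$ with the uniform Cartan-entry bound supplied by the hypothesis. By Proposition \ref{cartan-derived} and the derived invariance of being symmetric (recalled in the excerpt via \cite[Corollary 5.3]{Ric91}), every such $B$ is symmetric and $\langle-,-\rangle_B$ is positive definite on $K_0(\mathsf{K^b}(\mathsf{proj}\text{-}B)) \simeq \mathbb{Z}^n$; in particular $\{x \in \mathbb{Z}^n : \langle x, x \rangle_B \leq N\}$ is finite for every real $N$.

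The key identity I would establish is
\[
\langle [T], [T] \rangle_B = \mathsf{dim}_\mathsf{k} \mathsf{End}_{\mathsf{K^b}}(T)
\]
for any two-term presilting complex $T \in \mathsf{K^b}(\mathsf{proj}\text{-}B)$: the Euler sum defining the left-hand side only receives contributions from $k \in \{-1,0,1\}$ since $T$ has amplitude one, the presilting condition kills $k=1$, and the Nakayama duality $\mathsf{dim}_\mathsf{k}\mathsf{Hom}(X,Y) = \mathsf{dim}_\mathsf{k}\mathsf{Hom}(Y,X)$ available on $\mathsf{K^b}(\mathsf{proj}\text{-}B)$ because $B$ is symmetric (so $\nu \simeq \mathsf{id}$) kills $k=-1$ as well. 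The same duality incidentally shows that every silting object in $\mathsf{K^b}(\mathsf{proj}\text{-}B)$ is tilting.

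To exploit this, I would apply silting completion (Aihara-Iyama) to extend each indecomposable two-term presilting $T$ to a basic two-term silting, hence tilting, complex $\overline{T} = T \oplus T'$. Rickard's theorem then identifies $\mathsf{End}_{\mathsf{K^b}}(\overline{T})$ as an algebra derived equivalent to $B$, and therefore to $A$, so the hypothesis bounds every entry of its Cartan matrix by some uniform $N$; the diagonal entry indexed by $T$ is exactly $\mathsf{dim}_\mathsf{k}\mathsf{End}(T)$, whence $\langle [T], [T] \rangle_B \leq N$ by the identity above. Injectivity of the $g$-vector on indecomposable $\tau$-rigid $B$-modules, equivalently on indecomposable two-term presilting complexes (\cite{AIR}), then forces only finitely many such indecomposables, and \cite{DIJ} yields $\tau$-tilting finiteness of $B$.

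The main obstacle is the Euler-characteristic identity $\langle [T], [T] \rangle_B = \mathsf{dim}_\mathsf{k}\mathsf{End}(T)$, which depends on the Nakayama self-duality peculiar to symmetric algebras and demands careful sign bookkeeping for the three contributing degrees; a secondary subtlety is confirming that silting coincides with tilting in $\mathsf{K^b}(\mathsf{proj}\text{-}B)$ for symmetric $B$, so that Rickard's theorem genuinely transports the Cartan-entry hypothesis onto $\mathsf{End}_{\mathsf{K^b}}(\overline{T})$.
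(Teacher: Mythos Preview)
Your argument is correct and essentially reproduces the content of \cite[Theorem 13]{EJR-reduction}, which the paper simply cites as a black box. The paper's own proof is two lines: it invokes Proposition~\ref{cartan-derived} to transport positive definiteness of the Cartan matrix from $A$ to any derived-equivalent $B$, observes that the derived equivalence class of $B$ coincides with that of $A$ so the uniform bound on Cartan entries still applies, and then appeals to \cite[Theorem 13]{EJR-reduction} directly to conclude that $B$ is $\tau$-tilting finite.

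What you have written is precisely the mechanism behind that cited theorem: the Euler-form identity $\langle [T],[T]\rangle_B=\dim_{\mathsf{k}}\mathsf{End}(T)$ for two-term presilting $T$ over a symmetric algebra, completion to a two-term tilting complex whose endomorphism ring is derived equivalent to $A$, reading off the bound on $\dim_{\mathsf{k}}\mathsf{End}(T)$ from a diagonal Cartan entry, and finishing with positive definiteness plus injectivity of $g$-vectors. So your route is not a different proof so much as an in-line unpacking of the external reference. The trade-off is the obvious one: the paper's version is terse and relies on the reader knowing or looking up \cite{EJR-reduction}; yours is self-contained and makes visible exactly where each hypothesis (symmetric, positive definite, uniform Cartan bound) is consumed, at the cost of length and of re-deriving a known result.
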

\begin{proof}
By our assumption, it follows from \cite[Theorem 13]{EJR-reduction} that $A$ is $\tau$-tilting finite.
Let $B$ be an algebra that is derived equivalent to $A$. Then, by Proposition \ref{cartan-derived}, we deduce that $C_B$ is also positive definite. We apply \cite[Theorem 13]{EJR-reduction} again and conclude that $B$ is $\tau$-tilting finite.
\end{proof}

\begin{lemma}\label{sym_triv_inf}
Let $A$ be a $\tau$-tilting infinite algebra.
Then, so is $\mathsf{Triv}(A)$.
\end{lemma}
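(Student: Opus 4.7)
The plan is to leverage the surjective algebra homomorphism $\mathsf{Triv}(A)\to A$ that was already observed in Subsection~\ref{symmetric_alg}, together with the fact recalled in Subsection~\ref{tau-fin-alg} that $\tau$-tilting finiteness passes to factor algebras. The contrapositive of this latter fact says that if a factor algebra of $B$ is $\tau$-tilting infinite, then so is $B$ itself. Since $A$ appears as a factor of $\mathsf{Triv}(A)$, the conclusion should be essentially immediate.

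More concretely, I would first verify that the subspace $I:=0\oplus \mathsf{D}(A)$ is a two-sided ideal of $\mathsf{Triv}(A)$: by the multiplication rule $(a,f)(b,g)=(ab,ag+fb)$, one sees that $I\cdot \mathsf{Triv}(A)\subseteq I$ and $\mathsf{Triv}(A)\cdot I\subseteq I$, and in fact $I\cdot I=0$ since the $A$-component of a product of two elements of $I$ vanishes. The projection $(a,f)\mapsto a$ is then a surjective algebra homomorphism $\mathsf{Triv}(A)\to A$ with kernel $I$, giving the identification $\mathsf{Triv}(A)/I\simeq A$.

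With this identification in hand, the implication is purely formal: by the fully faithful functor $-\otimes_{A}A\colon \mathsf{mod}\text{-}A\to \mathsf{mod}\text{-}\mathsf{Triv}(A)$ induced by the quotient map, $\tau$-tilting finiteness of $\mathsf{Triv}(A)$ would force $\tau$-tilting finiteness of the quotient $A$. Hence the $\tau$-tilting infiniteness of $A$ must propagate upward to $\mathsf{Triv}(A)$, proving the lemma.

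There is no real obstacle here; the statement is essentially a restatement of the quotient-preservation property applied to the canonical surjection $\mathsf{Triv}(A)\twoheadrightarrow A$. The only step that deserves explicit mention is the verification that $0\oplus \mathsf{D}(A)$ really is an ideal, which follows directly from the definition of the multiplication in the trivial extension.
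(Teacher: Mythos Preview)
Your proposal is correct and follows exactly the same approach as the paper: both use the canonical surjection $\mathsf{Triv}(A)\twoheadrightarrow A$ together with the fact from Subsection~\ref{tau-fin-alg} that $\tau$-tilting finiteness passes to factor algebras. You simply spell out the verification that $0\oplus \mathsf{D}(A)$ is a two-sided ideal, which the paper leaves implicit.
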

\begin{proof}
Since there is a surjective algebra homomorphism $\mathsf{Triv}(A)\to A$, the assertion follows immediately.
\end{proof}

\subsection{Non-domestic symmetric algebras of polynomial growth}
In \cite{AHMW}, the authors have shown that any non-domestic symmetric algebra of polynomial growth whose Cartan matrix is non-singular is $\tau$-tilting finite. Here, we omit the quiver and relations of $A_i$ $(i=1,2,\ldots,16)$ and $\Lambda_i$ $(i=1,2,\ldots,9)$.
The definitions of these algebras are summarised in, for example, \cite{AHMW}.

\begin{proposition}[{\cite[Theorems 3.1 and 3.3]{AHMW}}]\label{der-fin-PG}
Let $A$ be a non-domestic symmetric algebra of polynomial growth whose Cartan matrix is non-singular.
Then, any algebra $B$ which is derived equivalent to $A$ is $\tau$-tilting finite.
\end{proposition}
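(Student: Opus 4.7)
The plan is to combine the preservation theorems already established with the Morita classification summarized in Figure \ref{fig1}. Let $B$ be any algebra derived equivalent to $A$. First, Proposition \ref{sym_hiera} ensures that $B$ is again a non-domestic symmetric algebra of polynomial growth. Second, the argument in Proposition \ref{cartan-derived} produces an invertible integer matrix $P$ representing the isomorphism $K_0(\mathsf{K^b}(\mathsf{proj}\text{-}A)) \xrightarrow{\ \simeq\ } K_0(\mathsf{K^b}(\mathsf{proj}\text{-}B))$ induced by the derived equivalence, together with the identity $C_B = {}^t P C_A P$. Since $P$ is invertible over $\mathbb{Z}$, one has $\det P = \pm 1$, so $\det C_B = \det C_A \neq 0$; hence the non-singularity of the Cartan matrix is also inherited by $B$.

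Consequently $B$ satisfies the hypotheses labelling the non-singular rows of the non-domestic polynomial-growth part of Figure \ref{fig1}, and must therefore be Morita equivalent to one of the finitely many algebras $\{A_i\}_{i=1}^{16}$ (standard case) or $\{\Lambda_i\}_{i=1}^{9}$ (non-standard case). Each of these representatives is shown to be $\tau$-tilting finite in \cite[Theorems 3.1 and 3.3]{AHMW}, and $\tau$-tilting finiteness is preserved under Morita equivalence by the reductions recalled in Subsection \ref{tau-fin-alg}. Combining these, $B$ is $\tau$-tilting finite.

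The main obstacle is conceptually mild but technically important: ensuring that \emph{non-singularity}, rather than only positive definiteness, is preserved under derived equivalence; this is what the $\mathbb{Z}$-invertibility of $P$ delivers, and without it one could not reduce to the classification at all. Alternatively, one could route the argument through Lemma \ref{sym_tau_fin} by directly verifying positive definiteness of the Cartan matrix of each representative (visible from the quivers and relations in \cite{BS03, BHS04, BS04}) and noting that Cartan entries are automatically bounded across each derived class because each such class contains only finitely many Morita classes; the classification route above is, however, cleaner.
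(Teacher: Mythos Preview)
Your argument is correct and follows the same route as the paper: reduce $B$ via the Morita classification recorded in Figure~\ref{fig1} to one of the finitely many representatives $A_i$ or $\Lambda_i$, and then invoke \cite[Theorems~3.1 and~3.3]{AHMW}. The paper's proof is the one-line ``follows from Figure~\ref{fig1} and \cite[Theorems~3.1 and~3.3]{AHMW}''; you have simply made explicit the two preservation steps (non-domestic polynomial growth via Proposition~\ref{sym_hiera}, and non-singularity of the Cartan matrix via the relation $C_B={}^tPC_AP$ with $P\in\mathrm{GL}(\mathbb{Z})$ extracted from the proof of Proposition~\ref{cartan-derived}) that justify landing $B$ in the correct row of the table.
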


In this subsection, the remaining algebras are the non-domestic symmetric algebras of polynomial growth with a singular Cartan matrix. Then, we have
\begin{proposition}\label{der-inf-PG}
Let $A$ be a non-domestic symmetric algebra of polynomial growth whose Cartan matrix is singular. Then, any algebra $B$ which is derived equivalent to $A$ is $\tau$-tilting infinite.
\end{proposition}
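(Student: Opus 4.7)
The plan is to combine the Morita classification displayed in Figure \ref{fig1} with the reduction supplied by Lemma \ref{sym_triv_inf}. First I would verify that any $B$ derived equivalent to $A$ is itself a non-domestic symmetric algebra of polynomial growth whose Cartan matrix is singular. Proposition \ref{sym_hiera} takes care of symmetry, non-domesticity, and polynomial growth. For the Cartan matrix, I would extract from the proof of Proposition \ref{cartan-derived} the matrix identity $C_B = {}^{t}P\,C_A\,P$ with $P\in\mathrm{GL}(\mathbb{Z})$; this immediately gives $\det C_B = (\det P)^2 \det C_A = 0$, so singularity of $C_A$ forces singularity of $C_B$.

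Next I would inspect the non-domestic polynomial growth portion of Figure \ref{fig1}: the non-standard subclass contains no algebra with singular Cartan matrix, so $B$ lies in the standard subclass, and the Morita classification there yields a tubular algebra $\Lambda'$ such that $B$ is Morita equivalent to $\mathsf{Triv}(\Lambda')$. By Ringel's construction, any tubular algebra is representation-infinite and possesses a preprojective component in its Auslander-Reiten quiver; hence Example \ref{ex1}(3) gives that $\Lambda'$ is $\tau$-tilting infinite. Applying Lemma \ref{sym_triv_inf} lifts this to $\mathsf{Triv}(\Lambda')$, and since $\tau$-tilting finiteness is preserved under Morita equivalence we conclude that $B$ is $\tau$-tilting infinite.

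I do not anticipate a serious obstacle. The only delicate point is the passage from $A$ to $B$ through the classification, which requires knowing that a singular Cartan matrix survives derived equivalence; this is precisely where the identity $C_B = {}^{t}P\,C_A\,P$ from the proof of Proposition \ref{cartan-derived} is essential, and without it one could not invoke the Morita list in Figure \ref{fig1} for $B$ itself. The remaining pieces — the preprojective component built into the definition of a tubular algebra, and the surjection $\mathsf{Triv}(\Lambda')\twoheadrightarrow\Lambda'$ underlying Lemma \ref{sym_triv_inf} — are already at hand.
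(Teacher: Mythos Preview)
Your proposal is correct and follows essentially the same route as the paper's proof: both reduce $B$ to the Morita class of trivial extensions of tubular algebras via Figure \ref{fig1}, then invoke the preprojective component of a tubular algebra together with Lemma \ref{sym_triv_inf}. Your treatment is in fact slightly more careful on one point: the paper cites Proposition \ref{cartan-derived} for the preservation of singularity, but the \emph{statement} of that proposition concerns positive definiteness; you correctly extract the underlying identity $C_B = {}^{t}P\,C_A\,P$ from its proof to conclude $\det C_B = 0$.
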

\begin{proof}
By Propositions \ref{sym_hiera} and \ref{cartan-derived}, $B$ must be a non-domestic symmetric algebra of polynomial growth with a singular Cartan matrix. Then $B$ is isomorphic to a trivial extension of a tubular algebra (see Table \ref{fig1}).
Since any tubular algebra admits a unique preprojective component with infinitely many vertices (for example, see \cite[XIX, Theorem 3.20]{SS07(2)}), $B$ is $\tau$-tilting infinite by Lemma \ref{sym_triv_inf}.
\end{proof}

\subsection{Representation-infinite domestic symmetric algebras}
In this subsection, we first recall the complete list of representatives for the derived equivalence classes of representation-infinite domestic symmetric algebras. Then, we determine the $\tau$-tilting finiteness of these representatives. Lastly, we use Lemmas \ref{sym_tau_fin} and \ref{sym_triv_inf} to deal with the algebras which are derived equivalent to these representatives.

Now, we define the following quivers to recall such derived equivalence classes.
\[ 
\begin{array}{cc}
\textbf{(I): \underline{$\Delta(p,q)$ with $p,q\geq 1$}}  & \textbf{(II): \underline{$\Delta(p,q,r)$ with $p,q,r\geq 1$}} \\
\scalebox{0.8}{\begin{xy}
(0,0) *[o]+{\bullet}="0", (-6,14) *[o]+{\bullet}="1",(-20,20) *[o]+{\bullet}="2", (-34,14) *[o]+{}="3",(-40,0) *[]+{}="4",  (-36,-14) *[]+{}="p-2", (-20,-20) *[o]+{\bullet}="p-1", (-6,-14) *[o]+{\bullet}="p",
(6,14) *[o]+{\bullet}="11",(20,20) *[o]+{\bullet}="12", (34,14) *[o]+{}="13",(40,0) *[]+{}="14",  (36,-14) *[]+{}="1p-2", (20,-20) *[o]+{\bullet}="1p-1", (6,-14) *[o]+{\bullet}="1p",
\ar @/^ -1.5mm/ "0";"1"^{\alpha _1}
\ar @/^ -1.5mm/ "1";"2"^{\alpha _2}
\ar @/^ -1.5mm/ "2";"3"^{\alpha _3}
\ar @{--} @/^ -5.5mm/ "3";"p-2"_{}
\ar @/^ -1.5mm/  "p-2";"p-1"^{\alpha_{p-2}}
\ar @/^ -1.5mm/  "p-1";"p"^{\alpha_{p-1}}
\ar @/^ -1.5mm/  "p";"0"^{\alpha_p}
\ar @/^ 1.5mm/ "0";"11"_{\beta _1}
\ar @/^ 1.5mm/ "11";"12"_{\beta _2}
\ar @/^ 1.5mm/ "12";"13"_{\beta _3}
\ar @{--} @/^ 5.5mm/ "13";"1p-2"_{}
\ar @/^ 1.5mm/  "1p-2";"1p-1"_{\beta_{q-2}}
\ar @/^ 1.5mm/  "1p-1";"1p"_{\beta_{q-1}}
\ar @/^ 1.5mm/  "1p";"0"_{\beta_q}
\end{xy}}
&
\scalebox{0.7}{\begin{xy}
(0,0) *[o]+{\bullet}="0", (-6,14) *[o]+{\bullet}="1",(-20,25) *[o]+{\bullet}="2", (-37,27) *[o]+{}="3",(-60,0) *[]+{}="4",  (-42,15) *[]+{}="p-2", (-30,5) *[o]+{\bullet}="p-1", (-15,0) *[o]+{\bullet}="p",
(6,14) *[o]+{\bullet}="11",(20,25) *[o]+{\bullet}="12", (37,27) *[o]+{}="13",(45,35) *[]+{}="14",  (42,15) *[]+{}="1p-2", (30,5) *[o]+{\bullet}="1p-1", (15,0) *[o]+{\bullet}="1p",
(-9,-14) *[o]+{\bullet}="21",(-11.5,-30) *[o]+{\bullet}="22", (-7,-45) *[o]+{}="23",(45,35) *[]+{}="24",  (6,-45) *[]+{}="2p-2", (11.5,-30) *[o]+{\bullet}="2p-1", (9,-14) *[o]+{\bullet}="2p",
\ar @/^ -1.5mm/ "0";"1"^{\alpha _1}
\ar @/^ -1.5mm/ "1";"2"^{\alpha _2}
\ar @/^ -1.5mm/ "2";"3"^{\alpha _3}
\ar @{--} "3";"p-2"_{}
\ar @/^ -1.5mm/  "p-2";"p-1"^{\alpha_{p-2}}
\ar @/^ -1.5mm/  "p-1";"p"^{\alpha_{p-1}}
\ar @/^ -1.5mm/  "p";"0"^{\alpha_p}

\ar @/^ 1.5mm/ "0";"11"_{\beta _1}
\ar @/^ 1.5mm/ "11";"12"_{\beta _2}
\ar @/^ 1.5mm/ "12";"13"_{\beta _3}
\ar @{--}  "13";"1p-2"_{}
\ar @/^ 1.5mm/  "1p-2";"1p-1"_{\beta_{q-2}}
\ar @/^ 1.5mm/  "1p-1";"1p"_{\beta_{q-1}}
\ar @/^ 1.5mm/  "1p";"0"_{\beta_q}

\ar @/^ -1.5mm/ "0";"21"^{\gamma _1}
\ar @/^ -1.5mm/ "21";"22"^{\gamma _2}
\ar @/^ -1.5mm/ "22";"23"^{\gamma _3}
\ar @{--}  "23";"2p-2"_{}
\ar @/^ -1.5mm/  "2p-2";"2p-1"^{\gamma_{r-2}}
\ar @/^ -1.5mm/  "2p-1";"2p"^{\gamma_{r-1}}
\ar @/^ -1.5mm/  "2p";"0"^{\gamma_r}
\end{xy}}
\end{array}
\]
\[
\begin{array}{ll}
\textbf{(III): \underline{$\Sigma(p,q)$ with $p,q\geq 1$}} & \textbf{(IV): \underline{$\Theta(r)$ with $r\geq 2$}} \\
\scalebox{0.8}{\begin{xy}
(0,5) *[o]+{\bullet}="0",
(0,-5) *[o]+{\bullet}="00",
(-8,15) *[o]+{\bullet}="1",(-20,20) *[o]+{\bullet}="2", (-34,14) *[o]+{}="3",(-40,0) *[]+{}="4",  (-36,-14) *[]+{}="p-2", (-20,-20) *[o]+{\bullet}="p-1", (-8,-15) *[o]+{\bullet}="p",
(8,15) *[o]+{\bullet}="11",(20,20) *[o]+{\bullet}="12", (34,14) *[o]+{}="13",(40,0) *[]+{}="14",  (36,-14) *[]+{}="1p-2", (20,-20) *[o]+{\bullet}="1p-1", (8,-15) *[o]+{\bullet}="1p",
\ar @/^ -1.5mm/ "0";"1"^{\alpha _1}
\ar @/^ -1.5mm/ "1";"2"^{\alpha _2}
\ar @/^ -1.5mm/ "2";"3"^{\alpha _3}
\ar @{--} @/^ -6mm/ "3";"p-2"_{}
\ar @/^ -1.5mm/  "p-2";"p-1"^{\alpha_{p-2}}
\ar @/^ -1.5mm/  "p-1";"p"^{\alpha_{p-1}}
\ar @/^ -1.5mm/  "p";"00"^{\alpha_p}

\ar @/^ 1.5mm/ "0";"11"_{\beta _1}
\ar @/^ 1.5mm/ "11";"12"_{\beta _2}
\ar @/^ 1.5mm/ "12";"13"_{\beta _3}
\ar @{--} @/^ 5.5mm/ "13";"1p-2"_{}
\ar @/^ 1.5mm/  "1p-2";"1p-1"_{\beta_{q-2}}
\ar @/^ 1.5mm/  "1p-1";"1p"_{\beta_{q-1}}
\ar @/^ 1.5mm/  "1p";"00"_{\beta_q}

\ar @<0.1cm>"00";"0"^{\gamma}
\ar @<-0.1cm>"00";"0"_{\sigma}
\end{xy}}
& \qquad\qquad
\scalebox{0.8}{\begin{xy}
(0,0) *[o]+{\bullet}="0",
(-8,15) *[o]+{\bullet}="1",(-8,-15) *[o]+{\bullet}="p",
(8,15) *[o]+{\bullet}="11",(20,20) *[o]+{\bullet}="12", (34,14) *[o]+{}="13",(40,0) *[]+{}="14",  (36,-14) *[]+{}="1p-2", (20,-20) *[o]+{\bullet}="1p-1", (8,-15) *[o]+{\bullet}="1p",
(15,8) *[o]+{\bullet}="111",
\ar @/^ 1.5mm/ "0";"11"_{\gamma _1}
\ar @/^ 1.5mm/ "11";"12"_{\gamma _2}
\ar @/^ 1.5mm/ "12";"13"_{\gamma _3}
\ar @{--} @/^ 5.5mm/ "13";"1p-2"_{}
\ar @/^ 1.5mm/  "1p-2";"1p-1"_{\gamma_{q-2}}
\ar @/^ 1.5mm/  "1p-1";"1p"_{\gamma_{q-1}}
\ar @/^ 1.5mm/  "1p";"0"_{\gamma_q}

\ar @<0.1cm>"0";"1"^{\alpha_1}
\ar @<0.1cm>"1";"0"^{\alpha_2}
\ar @<0.1cm>"0";"p"^{\beta_1}
\ar @<0.1cm>"p";"0"^{\beta_2}
\ar @/^ -1.5mm/ "0";"111"_{\sigma _1}
\ar @/^ -1.5mm/ "111";"12"_{\sigma _2}
\end{xy}}
\end{array} 
\]

\begin{definition}
We define some symmetric algebras as follows.
\begin{enumerate}
\item Set $A(p,q):=\k \Delta(p,q)/I_1(p,q)$ for any $1\leq p\leq q$, where $I_1(p,q)$ is generated by 
\begin{itemize}
\item $\alpha_1\alpha_2\cdots\alpha_p\beta_1\beta_2\cdots\beta_q= \beta_1\beta_2\cdots\beta_q\alpha_1\alpha_2\cdots\alpha_p$,
\item $\alpha_p\alpha_1=\beta_q\beta_1=0$,
\item $\alpha_{i}\alpha_{i+1}\cdots\alpha_p\beta_1\beta_2\cdots\beta_q\alpha_1\alpha_2\cdots\alpha_{i}=0$, for any $2\leq i\leq p-1$,
\item $\beta_{j}\beta_{j+1}\cdots\beta_q\alpha_1\alpha_2\cdots\alpha_{p}\beta_1\beta_2\cdots\beta_{j}=0$, for any $2\leq j\leq q-1$.
\end{itemize}

\item Set $\Lambda(m):=\k\Delta(1,m)/I_2(m)$ for any $m\geq 2$, where $I_2(m)$ is generated by
\begin{itemize}
\item $\alpha_1^2=(\beta_1\beta_2\cdots\beta_m)^2$,
\item $\alpha_1\beta_1=\beta_m\alpha_1=0$,
\item $\beta_{j}\beta_{j+1}\cdots\beta_m\beta_1\beta_2\cdots\beta_{m}\beta_1\beta_2\cdots\beta_{j}=0$, for any $2\leq j\leq m-1$.
\end{itemize}

\item Set $\Gamma(n):=\k\Delta(2,2,n)/I_3(n)$ for any $n\geq 1$, where $I_3(n)$ is generated by
\begin{itemize}
\item $\alpha_1\alpha_2 = (\gamma_1\gamma_2\cdots\gamma_n)^2=\beta_1\beta_2$,
\item $\alpha_2\gamma_1=\beta_2\gamma_1=\gamma_n\alpha_1=\gamma_n\beta_1 =\alpha_2\beta_1=\beta_2\alpha_1=0$,
\item $\gamma_{j}\gamma_{j+1}\cdots\gamma_n\gamma_1\gamma_2\cdots\gamma_{n}\gamma_1\gamma_2\cdots\gamma_{j}=0$, for any $2\leq j\leq n-1$.
\end{itemize}

\item Set $T(p,q,r):=\k\Delta(p,q,r)/I_4(p,q,r)$ for any $2\leq p\leq q \leq r$, where $I_4(p,q,r)$ is generated by
\begin{itemize}
\item $\alpha_1\alpha_2\cdots\alpha_p=\beta_1\beta_2\cdots\beta_q =\gamma_1\gamma_2\cdots\gamma_r$,
\item $\alpha_p\gamma_1=\beta_q\gamma_1=\gamma_r\alpha_1=\gamma_r\beta_1 =\alpha_p\beta_1=\beta_q\alpha_1=0$,
\item $\alpha_{i}\alpha_{i+1}\cdots\alpha_p\alpha_1\alpha_2\cdots\alpha_{i}=0$,  for any $2\leq i\leq p-1$,
\item $\beta_{j}\beta_{j+1}\cdots\beta_q\beta_1\beta_2\cdots\beta_{j}=0$, for any $2\leq j\leq q-1$,
\item $\gamma_{k}\gamma_{k+1}\cdots\gamma_r\gamma_1\gamma_2\cdots\gamma_{k}=0$,  for any $2\leq k\leq r-1$.
\end{itemize}

\item Set $T(p,q):=\k\Sigma(p,q)/I_5(p,q)$ for any $1\leq p\leq q$, where $I_5(p,q)$ is generated by
\begin{itemize}
\item $\alpha_1\alpha_2\cdots\alpha_p\gamma=\beta_1\beta_2\cdots\beta_q \sigma$,
\item $\gamma\alpha_1\alpha_2\cdots\alpha_p=\sigma\beta_1\beta_2\cdots\beta_q$,
\item $\alpha_p\sigma=\sigma\alpha_1=\beta_q\gamma=\gamma\beta_1=0$,
\item $\alpha_{i}\alpha_{i+1}\cdots\alpha_p\gamma\alpha_1\alpha_2\cdots\alpha_{i}=0$, for any $2\leq i\leq p-1$,
\item $\beta_{j}\beta_{j+1}\cdots\beta_q\sigma\beta_1\beta_2\cdots\beta_{j}=0$, for any $2\leq j\leq q-1$.
\end{itemize}

\item Set $T(2,2,r)^\ast:=\k\Theta(r)/I_6(r)$ for any $2\leq r$, where $I_6(r)$ is generated by
\begin{itemize}
\item $\alpha_1\alpha_2=\beta_1\beta_2=\gamma_1\gamma_2\cdots\gamma_r$,
\item $\gamma_r\alpha_1=\beta_2\alpha_1=\gamma_r\beta_1=\alpha_2\beta_1=\alpha_2\gamma_1=\alpha_2\sigma_1=\beta_2\gamma_1=\beta_2\sigma_1=0$,
\item $\alpha_2\alpha_1\alpha_2=\beta_2\beta_1\beta_2=0$,
\item $\gamma_2\gamma_3\cdots\gamma_r\sigma_1=\sigma_2\gamma_3\cdots\gamma_r\gamma_1=0$,
\item $\gamma_{k}\gamma_{k+1}\cdots\gamma_r\gamma_1\gamma_2\cdots\gamma_{k}=0$, for any $3\leq k\leq r-1$.
\end{itemize}

\item Set $\Omega(n):=\k\Delta(1,n)/I_7(n)$ for any $n\geq 1$, where $I_7(n)$ is generated by
\begin{itemize}
\item $\alpha_1\beta_1\beta_2\cdots\beta_n=\beta_1\beta_2\cdots\beta_n\alpha_1$,
\item $\alpha_1^2=\alpha_1\beta_1\beta_2\cdots\beta_n$,
\item $\beta_n\beta_1=0$,
\item $\beta_{k}\beta_{k+1}\cdots\beta_n\alpha_1\beta_1\beta_2\cdots\beta_{k}=0$, for any $2\leq k\leq n-1$.
\end{itemize}
\end{enumerate}
\end{definition}

We have the following results related to standard domestic symmetric algebras. Note that not all $T(p,q,r)$'s are domestic algebras.

\begin{lemma}\label{finite-A}
The following statements hold.
\begin{enumerate}
\item[\rm{(1)}] For any $1\leq p\leq q$, the algebra $A(p,q)$ is $\tau$-tilting finite.
\item[\rm{(2)}] For any $m\geq 2$, the algebra $\Lambda(m)$ is $\tau$-tilting finite.
\end{enumerate}
\end{lemma}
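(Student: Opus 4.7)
The plan is to apply Lemma \ref{sym_tau_fin} to each of $A(p,q)$ and $\Lambda(m)$. This requires two inputs: positive definiteness of the Cartan matrix, and uniform boundedness of Cartan matrix entries across the derived equivalence class.

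First I would compute the Cartan matrices explicitly. Using the defining relations, the nilpotence conditions (ii)--(iv) of $A(p,q)$ restrict the non-zero paths so that $e_0A(p,q)$ has basis $\{e_0,\ \alpha_1\cdots\alpha_p,\ \beta_1\cdots\beta_q,\ \alpha_1\cdots\alpha_p\beta_1\cdots\beta_q\}$ (the last element is the socle, identified with $\beta_1\cdots\beta_q\alpha_1\cdots\alpha_p$ via relation (i)); for each non-central vertex $i$ lying on one of the loops, $e_iA(p,q)$ has the smaller basis consisting of $e_i$ together with the single rotation of that loop based at $i$. A parallel analysis produces $C_{\Lambda(m)}$, where the single-arrow loop $\alpha_1$ contributes extra path elements at the central vertex. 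The resulting Cartan matrices have small, explicitly computable entries.

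Next I would establish positive definiteness of these matrices. This can be done by an inductive application of Sylvester's criterion: the removal of the row and column indexed by a degree-two vertex at the ``end'' of one of the loops of $\Delta(p,q)$ reduces to a Cartan matrix of the same shape but with strictly smaller loop length, and the induction terminates at a small base case that is verified by hand. Alternatively, one can compute $\det C$ in closed form (expected to be a small positive polynomial in $p,q$ or $m$) and exhibit a Cholesky-type decomposition.

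For the boundedness condition, I would combine Proposition \ref{sym_hiera}, Proposition \ref{cartan-derived} and Figure \ref{fig1}. Any algebra $B$ derived equivalent to $A(p,q)$ (resp.\ $\Lambda(m)$) is again a representation-infinite domestic symmetric algebra whose Cartan matrix is non-singular and positive definite, so Figure \ref{fig1} identifies $B$ up to Morita equivalence with one of the algebras $\Lambda(T,v_1,v_2)$, $\Lambda'(T)$, $\Gamma^{(0)}(T,v)$, $\Gamma^{(1)}(T,v)$ or $\Gamma^{(2)}(T,v_1,v_2)$ of \cite{BoS05}. The invariants $\det C_B=\det C_A$ and $\mathrm{rank}\,K_0(B)=\mathrm{rank}\,K_0(A)$ restrict the parameter data $(T,v_1,v_2)$ to a finite set, whence the entries of $C_B$ are uniformly bounded, and Lemma \ref{sym_tau_fin} completes the proof. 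The main obstacle will be the explicit enumeration of non-zero paths in the first step: relations (iii)--(iv) involve long words that interact subtly with the commutativity relation (i), and care is required not to miscount the socle layer. Once the Cartan matrices are in hand, the remaining steps are largely routine.
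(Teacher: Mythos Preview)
Your approach is workable but differs from the paper's. The paper does not compute Cartan matrices for $A(p,q)$ or $\Lambda(m)$ at all: instead it observes that both are Brauer graph algebras, exhibits the corresponding Brauer graphs (a star with a single loop for $A(p,q)$; a star with two multiplicity-$2$ vertices for $\Lambda(m)$), and then invokes \cite[Theorem 6.7]{AAC}, which classifies $\tau$-tilting finite Brauer graph algebras purely by the combinatorics of the graph. This bypasses the path-counting, the Sylvester induction, and the appeal to the \cite{BoS05} Morita classification entirely.

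Your route via Lemma~\ref{sym_tau_fin} is precisely the method the paper employs for $\Gamma(n)$ in Lemma~\ref{finite-G}, and the paper explicitly remarks (in the proof of Proposition~\ref{der-fin-D}) that the same argument also handles $A(p,q)$ and $\Lambda(m)$. So your strategy is sound and in fact yields the stronger conclusion that the entire derived equivalence class is $\tau$-tilting finite, which the paper only reaches later. The cost is that you must carry out the Cartan matrix computation and the positive-definiteness check, and you must invoke the full Morita classification from \cite{BoS05} to get boundedness; the paper's Brauer-graph shortcut avoids all of this for the lemma as stated. One small inaccuracy: the families $\Gamma^{(i)}(T,v)$ lie in the derived class of $\Gamma(n)$, not of $A(p,q)$ or $\Lambda(m)$, so your list of candidate Morita types for $B$ should be trimmed to $\Lambda(T,v_1,v_2)$ and $\Lambda'(T)$; the finiteness argument is unaffected.
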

\begin{proof}
Observe that the algebras $A(p,q)$ and $\Lambda(m)$ are isomorphic to the Brauer graph algebras with respect to the following Brauer graphs $G_1$ and $G_2$, respectively.
\[
G_1=
\scalebox{0.65}{\begin{xy}
(0,0) *[o]+{\bullet}="0", (25,3) *{}="dm1",(25,-3) *{}="dm2",
(15,10) *[o]+{\bullet}="2", (20,2) *[o]+{\bullet}="3", (15,-10) *[o]+{\bullet}="p",
(-15,10) *[o]+{\bullet}="q", (-20,-2) *[o]+{\bullet}="33", (-15,-10) *[o]+{\bullet}="22"
\ar @{-}  "0";"2"^{2}
\ar @{-}  "0";"3"^{3}
\ar @/^ 1.5mm/  @{..}"3";"p"_{}
\ar @{-}  "0";"p"^{p}
\ar @{-}  "0";"22"^{p+1}
\ar @{-}  "0";"33"^{p+2}
\ar @/^ 1.5mm/  @{..}"33";"q"_{}
\ar @{-}  "0";"q"_{p+q}
\ar @(u,u)  @/^ 20mm/   @{-}"0";"dm1"
\ar @(u,u)  @/^ 0.5mm/   @{-}"dm1";"dm2"^{1}
\ar @(u,u)  @/^ 20mm/   @{-}"dm2";"0"
\end{xy}},
\qquad 
G_2=\scalebox{0.65}{
\begin{xy}
(0,0) *+[o][F-]+{\bullet}="0", (0,15) *+[o][F-]+{\bullet}="1",(25,-3) *{}="dm2",
(15,10) *[o]+{\bullet}="2", (20,0) *[o]+{\bullet}="3", (15,-12) *[o]+{\bullet}="4",
(-15,10) *[o]+{\bullet}="q", (-20,0) *[o]+{\bullet}="33", (-15,-12) *[o]+{\bullet}="22",
\ar @{-}  "0";"1"^{1}
\ar @{-}  "0";"2"^{2}
\ar @{-}  "0";"3"^{3}
\ar @{-}  "0";"4"^{4}
\ar @{-}  "0";"33"^{m-1}
\ar @{-}  "0";"22"^{m-2}
\ar @/^ 10mm/  @{..}"4";"22"_{}
\ar @{-}  "0";"q"^{m}
\end{xy}}
 \]
Here, the circled vertices have multiplicity 2.
We then apply \cite[Theorem 6.7]{AAC} to deduce that $A(p,q)$ and $\Lambda(m)$ are $\tau$-tilting finite.
\end{proof}

\begin{lemma}\label{finite-G}
For any $n\geq 1$, the algebra $\Gamma(n)$ is $\tau$-tilting finite.
\end{lemma}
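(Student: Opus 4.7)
My plan is to compute the Cartan matrix $C_{\Gamma(n)}$ explicitly from the quiver $\Delta(2,2,n)$ and the ideal $I_3(n)$, verify that $C_{\Gamma(n)}$ is positive definite for every $n \geq 1$, and then invoke \cite[Theorem 13]{EJR-reduction} (the same input used in the first line of the proof of Lemma \ref{sym_tau_fin}) to conclude that $\Gamma(n)$ is $\tau$-tilting finite.

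The first step is to list a basis of each indecomposable projective $e_v \Gamma(n)$, where $v$ runs over the vertices $0, a_1, b_1, c_1, \ldots, c_{n-1}$ of $\Delta(2,2,n)$. The relations force strong collapsing at vertex $0$: relation (i) identifies the three closed paths $\alpha_1\alpha_2$, $\beta_1\beta_2$ and $(\gamma_1\cdots\gamma_n)^2$ into a single socle element of $e_0 \Gamma(n) e_0$; relations (ii) kill every composition that tries to use $\alpha$ or $\beta$ before or after traversing the $\gamma$-cycle at vertex $0$; and relation (iii), together with consequences such as $(\gamma_1\cdots\gamma_n)^2 \gamma_1 = \alpha_1 \alpha_2 \gamma_1 = 0$, truncates every walk along the $\gamma$-cycle at length $2n$. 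A careful enumeration then yields $\dim e_0 \Gamma(n) e_0 = \dim e_{c_i} \Gamma(n) e_{c_i} = 3$, $\dim e_{a_1}\Gamma(n)e_{a_1} = \dim e_{b_1}\Gamma(n)e_{b_1} = 2$, $\dim e_0 \Gamma(n) e_{c_i} = 2$ and $\dim e_{c_i}\Gamma(n)e_{c_j} = 2$ for $i \neq j$, $\dim e_0 \Gamma(n) e_{a_1} = \dim e_0 \Gamma(n) e_{b_1} = 1$, and all remaining entries zero.

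Ordering the vertices as $(0, a_1, b_1, c_1, \ldots, c_{n-1})$, the Cartan matrix thus has the block form
\[
C_{\Gamma(n)} \;=\; \begin{pmatrix} A & B \\ B^{T} & M \end{pmatrix}, \quad A = \begin{pmatrix} 3 & 1 & 1 \\ 1 & 2 & 0 \\ 1 & 0 & 2 \end{pmatrix}, \quad B = 2\,\mathbf{e}_1 \mathbf{1}^{T}, \quad M = 2J_{n-1} + I_{n-1},
\]
where $\mathbf{e}_1 = (1,0,0)^{T}$, $\mathbf{1} \in \mathbb{R}^{n-1}$ is the all-ones vector and $J_{n-1}$ is the $(n-1)\times(n-1)$ all-ones matrix. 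The eigenvalues of $A$ are $1, 2, 4$, so $A$ is positive definite with $\det A = 8$ and $(A^{-1})_{11} = 1/2$. Hence the Schur complement of $A$ in $C_{\Gamma(n)}$ is
\[
M - B^{T} A^{-1} B \;=\; (2J_{n-1} + I_{n-1}) - 4(A^{-1})_{11}\, J_{n-1} \;=\; I_{n-1},
\]
which is positive definite. By the Schur complement criterion, $C_{\Gamma(n)}$ is positive definite for every $n \geq 1$, and \cite[Theorem 13]{EJR-reduction} applied to the symmetric algebra $\Gamma(n)$ finishes the proof.

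The main obstacle I anticipate is the basis enumeration in the first step: one must verify that each twice-around closed walk $(\gamma_{i+1}\cdots\gamma_n\gamma_1\cdots\gamma_i)^2$ is nonzero in $\Gamma(n)$ while every length-$(2n+1)$ continuation vanishes. Nonvanishing comes from $\Gamma(n)$ being symmetric, which forces the socle of each $P_{c_i}$ to be one-dimensional at $c_i$; vanishing comes from (iii) for interior indices and from the chain $(\gamma_1\cdots\gamma_n)^2 = \alpha_1\alpha_2$ together with (ii) in the boundary cases $i=n-1$.
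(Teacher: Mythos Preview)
Your Cartan matrix computation is correct and agrees (up to a reordering of the vertices) with the matrix displayed in the paper; the Schur complement argument for positive definiteness is tidier than the paper's ``easy to check''. The gap is in the final step. Look again at the first line of the proof of Lemma~\ref{sym_tau_fin}: the phrase ``by our assumption'' invokes \emph{both} hypotheses of that lemma, not just positive definiteness of $C_A$. The input to \cite[Theorem~13]{EJR-reduction} for a symmetric algebra $A$ is positive definiteness of $C_A$ \emph{together with} a uniform bound on the entries of $C_B$ for every algebra $B$ derived equivalent to $A$ (equivalently, for the endomorphism ring of every two-term tilting complex over $A$). Positive definiteness alone bounds each region $\{g \in \mathbb{Z}^{n+2} : g^{T} C_{\Gamma(n)} g \leq N\}$, but without a uniform $N$ there is no finiteness conclusion for the set of $g$-vectors.

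The paper supplies exactly this missing ingredient. It invokes the classification of \cite{BoS05}: every algebra derived equivalent to $\Gamma(n)$ is isomorphic to one of $\Gamma^{(0)}(T,v)$, $\Gamma^{(1)}(T,v)$, $\Gamma^{(2)}(T,v_1,v_2)$ for a Brauer graph $T$ with $n+2$ edges and vertex multiplicities at most $2$; since there are only finitely many such Brauer graphs, there are only finitely many such algebras, and hence the entries of their Cartan matrices are uniformly bounded. Only after this boundedness check is \cite[Theorem~13]{EJR-reduction} applied. Your proposal omits this step, so as written it does not establish the $\tau$-tilting finiteness of $\Gamma(n)$.
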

\begin{proof}
The Cartan matrix $C_{\Gamma(n)}$ is a square matrix of size $n+2$ of the form
\begin{center}
\scalebox{0.9}{$\begin{pmatrix}
2 & 0 & 1 & 0 & \cdots & 0 & 0 \\
0 & 2 & 1 & 0 & \cdots & 0 & 0 \\
1 & 1 & 3 & 2 & \cdots & 2 & 2 \\
0 & 0 & 2 & 3 & \cdots & 2 & 2 \\
\vdots & \vdots & \vdots & \vdots & \cdots & \vdots & \vdots \\
0 & 0 & 2 & 2 & \cdots & 3 & 2 \\
0 & 0 & 2 & 2 & \cdots & 2 & 3 \\
\end{pmatrix}.$}
\end{center}
It is easy to check that $C_{\Gamma(n)}$ is positive definite and $\mathsf{det}(C_{\Gamma(n)})=8$. Let $B$ be an algebra derived equivalent to $\Gamma(n)$. By \cite[Theorem 3.1]{BoS05} (see also \cite[Theorem 1]{BoS03}), one knows that $B$ is isomorphic to one of $\Gamma^{(0)}(T,v)$, $\Gamma^{(1)}(T,v)$ and $\Gamma^{(2)}(T,v_1,v_2)$ (see \cite{BoS05} for the definitions).
The algebras $\Gamma^{(0)}(T,v)$, $\Gamma^{(1)}(T,v)$ and $\Gamma^{(2)}(T,v_1,v_2)$ are determined by a Brauer graph $T$ whose vertex has at most multiplicity $2$. Since there are only finitely many such Brauer graphs with $n+2$ edges, we have only finitely many choices of $\Gamma^{(0)}(T,v)$, $\Gamma^{(1)}(T,v)$ and $\Gamma^{(2)}(T,v_1,v_2)$ having $n+2$ simple modules.
Thus, the entries of the Cartan matrix $C_B$ are bounded. Then, we deduce the statement by Lemma \ref{sym_tau_fin}.
\end{proof}

\begin{lemma}\label{infinite-T}
The following statements hold.
\begin{enumerate}
\item[\rm{(1)}] For any $1\leq p\leq q$, the algebra $T(p,q)$ is $\tau$-tilting infinite.
\item[\rm{(2)}] For any $r\geq 2$, the algebra $T(2,2,r)^{\ast}$ is $\tau$-tilting infinite.
\item[\rm{(3)}] The algebras $T(3,3,3)$, $T(3,4,4)$, $T(2,3,6)$ are $\tau$-tilting infinite.
\end{enumerate}
\end{lemma}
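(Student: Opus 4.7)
My plan is, for each of the five algebras, to identify it as a Brauer graph algebra whose underlying Brauer graph has at least two independent cycles (``faces''), and then to apply \cite[Theorem 6.7]{AAC} --- which characterises $\tau$-tilting finiteness of Brauer graph algebras in terms of combinatorial data of the underlying ribbon graph, and was already used in Lemma~\ref{finite-A} --- in the opposite direction to conclude $\tau$-tilting infiniteness. Since each $T(p,q)$, $T(2,2,r)^{\ast}$, and $T(p,q,r)$ is special biserial by construction, a Brauer graph presentation is available in every case.

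For part~(1), the quiver $\Sigma(p,q)$ contains two distinct cycles passing through the vertices $0$ and $00$, built from the two arms of lengths $p$ and $q$ together with the pair of arrows $\gamma,\sigma\colon 00\to 0$. With the relations $I_5(p,q)$, the algebra $T(p,q)$ is special biserial symmetric and its Brauer graph is a graph with at least two faces, so \cite[Theorem 6.7]{AAC} gives $\tau$-tilting infiniteness. For part~(2), the quiver $\Theta(r)$ contains three independent cycles through the central vertex $0$, and with the relations $I_6(r)$ the algebra $T(2,2,r)^{\ast}$ is a Brauer graph algebra whose underlying graph has three faces, so the same reference applies.

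For parts~(3)--(5), the quiver $\Delta(p,q,r)$ with $(p,q,r)\in\{(3,3,3),(3,4,4),(2,3,6)\}$ has three independent cycles meeting at the central vertex~$0$, and the relations $I_4(p,q,r)$ realise each algebra as a special biserial symmetric algebra whose Brauer graph has three faces meeting at a single vertex; \cite[Theorem 6.7]{AAC} then yields $\tau$-tilting infiniteness.

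The main obstacle is that the triple $(3,4,4)$ is \emph{not} among the extended Dynkin triples $\{(3,3,3),(2,4,4),(2,3,6)\}$, so $T(3,4,4)$ does not fit into the derived classification of representation-infinite domestic symmetric algebras in \cite{BoS05, HS} that organises the other cases. This means the only available route for $T(3,4,4)$ is to read off the Brauer graph directly from the relations $I_4(3,4,4)$: the three cyclic strings $\alpha_1\alpha_2\alpha_3$, $\beta_1\beta_2\beta_3\beta_4$, $\gamma_1\gamma_2\gamma_3\gamma_4$ equal a common socle element, while the remaining relations in $I_4(3,4,4)$ ensure that the biserial projectives glue into a ribbon graph with three distinct faces. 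Checking this combinatorial bookkeeping carefully is the delicate point; once it is done, \cite[Theorem 6.7]{AAC} completes part~(4) without invoking any tameness assumption.
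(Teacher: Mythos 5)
There is a genuine gap, and it is fatal to parts (2)--(5). Your entire strategy rests on the claim that $T(2,2,r)^{\ast}$ and $T(p,q,r)$ are ``special biserial by construction,'' but this is false: in the quiver $\Delta(p,q,r)$ the central vertex $0$ is the source of the three arrows $\alpha_1,\beta_1,\gamma_1$ and the target of the three arrows $\alpha_p,\beta_q,\gamma_r$, and in $\Theta(r)$ the vertex $0$ even has four outgoing arrows $\alpha_1,\beta_1,\gamma_1,\sigma_1$. A special biserial algebra has at most two arrows starting and at most two arrows ending at each vertex of its Gabriel quiver, so none of these algebras is a Brauer graph algebra and \cite[Theorem 6.7]{AAC} simply does not apply to them. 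Even in part (1), where $T(p,q)$ genuinely is a Brauer graph algebra, your combinatorial reading is off: its Brauer graph consists of two vertices joined by the two parallel edges corresponding to the quiver vertices $0$ and $00$, with pendant (truncated) edges attached along the arms, so it contains exactly one cycle, which is an even $2$-cycle, rather than ``at least two faces.'' The conclusion for (1) does follow from \cite[Theorem 6.7]{AAC}, but via the even-cycle clause, not the one you invoke. Finally, the difficulty you isolate around $T(3,4,4)$ is illusory: item (4) is a typographical slip for $T(2,4,4)$ (compare Figure \ref{fig1} and Proposition \ref{der-inf-D}), which does belong to the Euclidean list; and in any case the Brauer-graph route you propose as the ``only available'' one for it is exactly the route that is unavailable.

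The paper's actual argument is far more elementary and sidesteps all of this. Each algebra in the list admits a surjective algebra homomorphism onto the path algebra of an acyclic non-Dynkin quiver obtained by killing suitable arrows and idempotents: the Kronecker quiver for $T(p,q)$ (using the parallel arrows $\gamma,\sigma$), $\widetilde{D}_4$ for $T(2,2,r)^{\ast}$, and $\widetilde{E}_6$, $\widetilde{E}_7$, $\widetilde{E}_8$ for $T(3,3,3)$, $T(2,4,4)$, $T(2,3,6)$ respectively. Since $\tau$-tilting finiteness is inherited by quotients and these path algebras are $\tau$-tilting infinite (Example \ref{ex1}), the lemma follows. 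You should replace your argument by one of this type; a Brauer-graph argument can at best recover part (1).
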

\begin{proof}
(1) We notice that there is a surjection $T(p,q)\to \k (\xymatrix@C=0.7cm{\bullet \ar@<0.5ex>[r] \ar@<-0.5ex>[r] &\bullet})$. Since the Kronecker algebra $\k (\xymatrix@C=0.7cm{\bullet \ar@<0.5ex>[r] \ar@<-0.5ex>[r] &\bullet})$ is $\tau$-tilting infinite, so is $T(p,q)$.

(2) For any $r\geq 2$, there is a surjection $T(2,2,r)^{\ast}\to \k \widetilde{\mathbb{D}}_4$. Thus, $T(2,2,r)^{\ast}$ is $\tau$-tilting infinite.

(3) We set $A_1:=T(3,3,3)$, $A_2:=T(3,4,4)$ and $A_3:=T(2,3,6)$.
Then there is a surjection from $A_i$ to $\k\Delta_i$, where $\Delta_1=\widetilde{\mathbb{E}}_6$, $\Delta_2=\widetilde{\mathbb{E}}_7$ and $\Delta_3=\widetilde{\mathbb{E}}_8$. Hence, we conclude that $A_i$ is $\tau$-tilting infinite for $i=1,2,3$.
\end{proof}

Note that the non-standard domestic symmetric algebras occur only when the characteristic of $\k$ is $2$. In this case, a symmetric algebra $A$ is non-standard domestic if and only if $A$ is derived equivalent to $\Omega(n)$, for some $n\geq 1$ (\cite{BHS07}). Moreover, the algebras $\Omega(n)$'s form a complete set of representatives of pairwise different derived equivalence classes of representation-infinite non-standard domestic symmetric algebras (\cite[Theorem 2.2]{HS}).
\begin{lemma}\label{finite-omega}
For any $n\geq 1$, the algebra $\Omega(n)$ is $\tau$-tilting finite.
\end{lemma}
\begin{proof}
We notice that $\Omega(n)$ and $A(1,n)$ are socle equivalent, that is, $\Omega(n)/\soc~\Omega(n)$ and $A(1,n)/\soc A(1,n)$ are isomorphic.
It follows from \cite[Theorem 3.3 (2)]{Ada1} that there are poset isomorphisms
\[ 
\stautilt \Omega (n) \simeq \stautilt  (\Omega(n)/\soc~\Omega(n)) \simeq \stautilt  (A(1,n)/\soc~A(1,n)) \simeq \stautilt  A(1,n). 
\]
By Lemma \ref{finite-A}, $\Omega(n)$ is $\tau$-tilting finite.
\end{proof}

We recall from Table \ref{fig1} that the complete list of derived equivalence classes of representation-infinite domestic symmetric algebras is divided into the following cases:
\begin{itemize}
  \item standard representation-infinite domestic symmetric algebras with singular Cartan matrix, i.e., $T(p,q)$, $T(2,2,r)^\ast$, $T(3,3,3)$, $T(2,4,4)$, $T(2,3,6)$;
  \item standard representation-infinite domestic symmetric algebras with non-singular Cartan matrix, i.e., $A(p,q)$, $\Lambda(m)$, $\Gamma(n)$;
  \item non-standard representation-infinite domestic symmetric algebras with non-singular Cartan matrix, i.e., $\Omega(n)$.
\end{itemize}

\begin{proposition}\label{der-inf-D}
Any algebra $A$ which is derived equivalent to one of algebras $T(p,q)$, $T(2,2,r)^{\ast}$, $T(3,3,3)$, $T(2,4,4)$ and $T(2,3,6)$ is $\tau$-tilting infinite.
\end{proposition}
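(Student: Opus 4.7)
The plan is to show that $A$ must be Morita equivalent to the trivial extension of a Euclidean path algebra, and then conclude via Lemma \ref{sym_triv_inf}. First, I would collect the properties of $A$ forced by the derived equivalence. The algebra $A$ is symmetric (since derived equivalence preserves symmetry); it is representation-infinite and domestic by Proposition \ref{sym_hiera}; and its Cartan matrix $C_A$ is singular. The last point follows from the computation at the end of the proof of Proposition \ref{cartan-derived}, which produces an invertible matrix $P$ with $C_B = {}^{t}P\,C_A\,P$, so $\det C_B$ and $\det C_A$ differ by $(\det P)^2$, and singularity of the Cartan matrix is therefore preserved under derived equivalence of symmetric algebras. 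Moreover, $A$ has to be standard, since the only non-standard representation-infinite domestic symmetric algebras are, up to derived equivalence, the $\Omega(n)$, whose Cartan matrices are all non-singular.

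Next, I would invoke the Morita classification summarized in the top row of Figure \ref{fig1}: every standard representation-infinite domestic symmetric algebra with singular Cartan matrix is Morita equivalent to $\mathsf{Triv}(\Lambda)$ for some Euclidean path algebra $\Lambda = \mathsf{k}Q$, with $Q$ a quiver of Euclidean type. Thus, up to Morita equivalence, $A$ itself has this form.

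To finish, I would observe that any such hereditary $\Lambda$ is representation-infinite and admits a preprojective component, hence is $\tau$-tilting infinite by Example \ref{ex1}(3). Lemma \ref{sym_triv_inf} then gives that $\mathsf{Triv}(\Lambda)$ is $\tau$-tilting infinite, and Morita invariance of $\tau$-tilting finiteness transports the conclusion to $A$. The only delicate step is really bookkeeping, namely verifying that Figure \ref{fig1} is applied in the correct row and that the preservation statements (of symmetry, domesticity, standardness, and singularity of the Cartan matrix) all line up for $A$; no new representation-theoretic input beyond the material already assembled in this section is needed.
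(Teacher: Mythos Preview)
Your approach is essentially the paper's: reduce to the Morita classification in Figure~\ref{fig1} (equivalently, \cite[Theorem~2]{BoS05}), identify $A$ as a trivial extension, and then apply Lemma~\ref{sym_triv_inf}. The bookkeeping you outline (symmetry, domesticity, singularity of $C_A$, standardness) is exactly what places $A$ in the correct row of Figure~\ref{fig1}, and the paper simply cites \cite{BoS05} for the same conclusion.

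There is, however, one inaccuracy. The entry ``trivial extensions of Euclidean algebras'' in Figure~\ref{fig1} does \emph{not} mean trivial extensions of hereditary path algebras $\mathsf{k}Q$ with $Q$ Euclidean; it means trivial extensions of \emph{tilted} algebras of Euclidean type (this is what \cite[Theorem~2]{BoS05} actually proves, and it is how the paper's own proof reads the table). Up to Morita equivalence there are many such tilted algebras that are not hereditary, and they give pairwise non-Morita-equivalent trivial extensions, so you cannot conclude $A\simeq\mathsf{Triv}(\mathsf{k}Q)$. Fortunately the repair is immediate and already available in the paper: Example~\ref{ex1}(4) records that any tilted algebra $B$ of Euclidean type admits a preprojective component and is therefore $\tau$-tilting infinite, so Lemma~\ref{sym_triv_inf} still applies. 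Replace your appeal to Example~\ref{ex1}(3) for a hereditary $\Lambda$ by Example~\ref{ex1}(4) for the tilted algebra $B$, and the argument goes through verbatim.
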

\begin{proof}
Let $A$ be a symmetric algebra which is derived equivalent to one of the algebras $T(p,q)$, $T(2,2,r)^{\ast}$, $T(3,3,3)$, $T(2,4,4)$ and $T(2,3,6)$. Then, it follows from \cite[Theorem 2]{BoS05} that the algebra $A$ is isomorphic to a trivial extension $\mathsf{Triv}(B)$, where $B$ is a representation-infinite tilted algebra of Euclidean type.
Since $B$ is $\tau$-tilting infinite (see \cite{Z}), the assertion follows from Lemma \ref{sym_triv_inf}.
\end{proof}

\begin{proposition}\label{der-fin-D}
Any algebra $B$ which is derived equivalent to one of algebras $A(p,q)$, $\Lambda(m)$, $\Gamma(n)$ and $\Omega(n)$ is $\tau$-tilting finite.
\end{proposition}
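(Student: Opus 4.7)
The plan is to reduce each of the four cases to either an application of Lemma \ref{sym_tau_fin} or to the socle-equivalence trick used in Lemma \ref{finite-omega}. To apply Lemma \ref{sym_tau_fin} to a representative $A$, two inputs are required: (a) $C_A$ is positive definite; (b) the entries of $C_B$ are uniformly bounded as $B$ ranges over the derived equivalence class of $A$. Both will be extracted from the earlier lemmas together with the Morita classifications listed in Figure \ref{fig1}.

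For $\Gamma(n)$, both (a) and (b) are verified inside the proof of Lemma \ref{finite-G}: the displayed Cartan matrix is positive definite with $\mathsf{det}\,C_{\Gamma(n)}=8$, and the Morita representatives $\Gamma^{(0)}(T,v)$, $\Gamma^{(1)}(T,v)$, $\Gamma^{(2)}(T,v_1,v_2)$ in the derived class form a finite collection, parametrized by Brauer graphs with $n+2$ edges and vertex multiplicities bounded by $2$. For $A(p,q)$ and $\Lambda(m)$, the cleanest route is to invoke \cite{AAC}, where it is shown that derived equivalence preserves $\tau$-tilting finiteness among Brauer graph algebras; Lemma \ref{finite-A} already supplies the Brauer graph descriptions, and Lemma \ref{finite-A} itself confirms $\tau$-tilting finiteness of the representative. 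Alternatively, (a) and (b) can be checked directly using the Morita representatives (trivial extensions of Euclidean tilted algebras) listed in the standard-domestic row of Figure \ref{fig1} together with standard formulas for Cartan matrices of Brauer graph algebras.

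For $\Omega(n)$, rather than applying Lemma \ref{sym_tau_fin} directly, I would adapt the socle-equivalence argument of Lemma \ref{finite-omega}. Any $B$ derived equivalent to $\Omega(n)$ is non-standard by the characterization recorded just before Lemma \ref{finite-omega}, hence Morita equivalent to some member of the family $\Omega(T)$ of \cite{BoS06}; each such $\Omega(T)$ is socle equivalent to the standard Brauer graph algebra built on the same combinatorial datum, which itself lies in the derived equivalence class of some $A(1,m)$. Applying \cite[Theorem 3.3 (2)]{Ada1} as in Lemma \ref{finite-omega} yields an isomorphism of support $\tau$-tilting posets, reducing the problem to the already-treated $A(p,q)$ case.

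The main obstacle is the $\Omega(n)$ case: one has to pin down the socle-equivalence correspondence between the non-standard Morita family $\Omega(T)$ and its standard Brauer graph counterpart uniformly in $T$, which requires careful bookkeeping from \cite{BoS06, BHS07}. Modulo this, all steps are routine invocations of Lemmas \ref{sym_tau_fin}, \ref{sym_triv_inf}, \ref{finite-A}, \ref{finite-G}, and \ref{finite-omega}.
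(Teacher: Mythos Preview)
Your proposal is broadly workable but more circuitous than the paper's, and it contains one factual slip.

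The paper treats all four families uniformly via the argument of Lemma~\ref{sym_tau_fin}: each representative has non-singular (hence positive definite, the algebras being symmetric) Cartan matrix by \cite[Theorems 1.1, 1.2]{BHS04}, and each derived class contains only finitely many Morita types---the families $\Lambda(T,v_1,v_2)$, $\Lambda'(T)$, $\Gamma^{(i)}(T,\dots)$, $\Omega(T)$ in Figure~\ref{fig1} are all parametrised by Brauer graphs with a fixed number of edges and bounded multiplicities---so the Cartan entries are uniformly bounded over the derived class. One application of \cite[Theorem~13]{EJR-reduction} together with Proposition~\ref{cartan-derived} then finishes every case at once; no separate treatment of $\Omega(n)$ is needed.

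Your route through \cite{AAC} for $A(p,q)$ and $\Lambda(m)$ does work, but only once you check that every $B$ in those derived classes is itself a Brauer graph algebra; this holds because the relevant Morita representatives $\Lambda(T,v_1,v_2)$, $\Lambda'(T)$ from \cite{BoS05} are symmetric special biserial, but you do not verify it. Your stated \emph{alternative}, however, misreads Figure~\ref{fig1}: the trivial extensions of Euclidean tilted algebras occupy the \emph{singular} Cartan matrix row and are precisely the $\tau$-tilting infinite algebras of Proposition~\ref{der-inf-D}, not the non-singular representatives you need here.

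For $\Omega(n)$ your socle-equivalence extension to the whole Morita family $\Omega(T)$ is plausible but unnecessary. The paper simply observes that the same finiteness-of-Morita-types argument as in Lemma~\ref{finite-G} applies (there are only finitely many $\Omega(T)$ with a given number of simples), so Lemma~\ref{sym_tau_fin} handles this case directly and the bookkeeping you flag as the main obstacle never arises.
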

\begin{proof}
Let $A\in \{A(p,q), \Lambda(m), \Gamma(n), \Omega(n)\}$.
Then, it follows from \cite[Theorems 1.1 and 1.2]{BHS04} that $A$ has a non-singular Cartan matrix.
Moreover, the entries of the Cartan matrices of algebras which are derived equivalent to $A$ are bounded.
(Same arguments in the proof of Lemma \ref{finite-G} work for $A(p,q)$, $\Lambda(m)$, and $\Omega(n)$.)
Then the assertion follows from Lemmas \ref{finite-A}, \ref{finite-G}, \ref{finite-omega} and Proposition \ref{cartan-derived}.
\end{proof}

Summing up the above statements,  we can give the main result of this section.
\begin{theorem}\label{tau-finite_PG}
Let $A$ and $B$ be two representation-infinite symmetric algebras of polynomial growth. If $A$ is derived equivalent to $B$, then the following conditions are equivalent.
\begin{enumerate}
\item[\rm{(1)}] $A$ is $\tau$-tilting finite.
\item[\rm{(2)}] $B$ is $\tau$-tilting finite.
\item[\rm{(3)}] The Cartan matrix $C_A$ (or equivalently, $C_B$) is non-singular.
\end{enumerate}
\end{theorem}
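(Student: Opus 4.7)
The plan is to view the theorem as a synthesis of Propositions \ref{der-fin-PG}, \ref{der-inf-PG}, \ref{der-fin-D} and \ref{der-inf-D}, organized according to the complete classification displayed in Figure \ref{fig1}. First I would settle the parenthetical equivalence in (3): inspecting the proof of Proposition \ref{cartan-derived} shows that a derived equivalence between $A$ and $B$ yields an invertible integral matrix $P$ with $C_B={}^{t}P\,C_A\,P$, so $\det(C_B)=\det(P)^{2}\det(C_A)$; hence $C_A$ is non-singular if and only if $C_B$ is. Combined with Proposition \ref{sym_hiera}, which guarantees that derived equivalence preserves the property of being a representation-infinite symmetric algebra of polynomial growth (and of being domestic), this ensures that $A$ and $B$ occupy the same cell of Figure \ref{fig1}.

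Next I would perform the case analysis dictated by Figure \ref{fig1}. The algebra $A$ falls into exactly one of the following four classes: (i) non-domestic with non-singular Cartan matrix; (ii) non-domestic with singular Cartan matrix; (iii) domestic with non-singular Cartan matrix, i.e.\ derived equivalent to some $A(p,q)$, $\Lambda(m)$, $\Gamma(n)$ or $\Omega(n)$; (iv) domestic with singular Cartan matrix, i.e.\ derived equivalent to some $T(p,q)$, $T(2,2,r)^{\ast}$, $T(3,3,3)$, $T(2,4,4)$ or $T(2,3,6)$. In cases (i) and (iii), Propositions \ref{der-fin-PG} and \ref{der-fin-D} respectively yield that \emph{every} algebra in the derived equivalence class is $\tau$-tilting finite, so (1), (2) and (3) simultaneously hold. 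In cases (ii) and (iv), Propositions \ref{der-inf-PG} and \ref{der-inf-D} respectively yield $\tau$-tilting infiniteness for every algebra in the class, so (1), (2) and (3) simultaneously fail. In each cell the three conditions therefore agree, which gives the desired equivalence.

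There is no substantive obstacle here; the theorem is genuinely a compilation of work already completed in the section. The only two points warranting a sentence of care are, first, that the classification of Figure \ref{fig1} is exhaustive on representation-infinite symmetric algebras of polynomial growth (this rests on the cited structural results of Bocian--Skowro\'nski and Bia\l kowski--Holm--Skowro\'nski), and second, that invariance of non-singularity of the Cartan matrix under derived equivalence is extracted from the \emph{proof} of Proposition \ref{cartan-derived} rather than its stated form, as noted in the first paragraph. With these two remarks in place, the argument reduces to the four-way case check above.
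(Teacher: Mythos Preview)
Your proposal is correct and follows essentially the same approach as the paper: a four-way case split along the classification in Figure \ref{fig1}, invoking Propositions \ref{der-fin-PG}, \ref{der-inf-PG}, \ref{der-inf-D} and \ref{der-fin-D} in the respective cells. The only addition beyond the paper's own proof is your explicit justification of the parenthetical in (3) via the relation $C_B={}^{t}P\,C_A\,P$ from the proof of Proposition \ref{cartan-derived}, which is a welcome clarification.
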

\begin{proof}
We divide representation-infinite symmetric algebras of polynomial growth into the following subclasses,
\begin{itemize}
  \item representation-infinite domestic symmetric algebras with singular Cartan matrix;
  \item representation-infinite domestic symmetric algebras with non-singular Cartan matrix;
  \item non-domestic symmetric algebras of polynomial growth with singular Cartan matrix;
  \item non-domestic symmetric algebras of polynomial growth with non-singular Cartan matrix.
\end{itemize}
Then the statement follows from Propositions  \ref{der-fin-PG}, \ref{der-inf-PG},  \ref{der-inf-D}, and \ref{der-fin-D}
\end{proof}

A self-injective algebra $A$ is said to be \emph{tilting-discrete} if, for any $n\geq 1$, there are only finitely many isomorphism classes of basic $n$-term tilting complexes of $\Kb(\proj A)$.

\begin{corollary}
A symmetric algebra of polynomial growth whose Cartan matrix is non-singular is tilting-discrete.
\end{corollary}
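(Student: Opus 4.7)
The plan is to combine Theorem \ref{tau-finite_PG} with a known characterization of (tilting-)silting-discreteness for symmetric algebras. Recall a theorem of Aihara (from \emph{Tilting-connected symmetric algebras} and subsequent work): for a symmetric algebra $A$, silting-discreteness is equivalent to the property that $\mathsf{End}_{\mathsf{K^b}(\mathsf{proj}\text{-}A)}(T)$ is $\tau$-tilting finite for every tilting complex $T \in \mathsf{K^b}(\mathsf{proj}\text{-}A)$, i.e.\ every algebra derived equivalent to $A$ is $\tau$-tilting finite. Since every silting complex over a symmetric algebra is a tilting complex, silting-discreteness and tilting-discreteness coincide in this setting. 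This reduces the corollary to controlling the $\tau$-tilting finiteness of the entire derived equivalence class of $A$.

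Now let $A$ be a symmetric algebra of polynomial growth with non-singular Cartan matrix, and let $B$ be any algebra derived equivalent to $A$. First I would collect the closure properties already assembled in the paper: by Rickard \cite{Ric91} $B$ is symmetric; by Proposition \ref{sym_hiera} $B$ remains of polynomial growth; and by Proposition \ref{cartan-derived} the Cartan matrix $C_B$ is again non-singular (indeed, positive definite, as the bilinear form is preserved up to an invertible change of basis). Thus the class of algebras derived equivalent to $A$ consists entirely of symmetric algebras of polynomial growth with non-singular Cartan matrix.

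It remains to invoke $\tau$-tilting finiteness for each such $B$. If $B$ is representation-finite this is automatic by Example \ref{ex1}(2); if $B$ is representation-infinite, Theorem \ref{tau-finite_PG} applies directly and yields that $B$ is $\tau$-tilting finite. Feeding this back into Aihara's criterion gives that $A$ is silting-discrete, hence tilting-discrete.

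The main obstacle is essentially bibliographic rather than mathematical: one needs to state Aihara's criterion in the form above (alternatively phrased as ``$A$ is silting-discrete iff $A$ is 2-silting finite together with every algebra derived equivalent to $A$'') and make sure the symmetry hypothesis is used to identify silting with tilting and to transfer $\tau$-tilting finiteness through derived equivalence via Theorem \ref{tau-finite_PG}. Once the criterion is in place, the argument is a one-line application of the main theorem of the paper.
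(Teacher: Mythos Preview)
Your proposal is correct and takes essentially the same approach as the paper: the paper's proof is the one-line ``follows from Theorem \ref{tau-finite_PG} and \cite[Corollary 2.11]{AM}'', where the cited result from Aihara--Mizuno is precisely the criterion you invoke (a symmetric algebra is tilting-discrete iff every algebra derived equivalent to it is $\tau$-tilting finite). Your explicit handling of the representation-finite case and the closure properties under derived equivalence just spells out what the paper leaves implicit in that citation.
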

\begin{proof}
The statement follows from Theorem \ref{tau-finite_PG} and \cite[Corollary 2.11]{AM}.
\end{proof}

As another application of Theorem \ref{tau-finite_PG}, we find that any self-injective cellular algebra of polynomial growth is $\tau$-tilting finite.

\begin{corollary}\label{finite_cellular}
Suppose the characteristic of $\k$ is not $2$. Then, any (not necessarily basic) algebra which is Morita equivalent to a self-injective cellular algebra of polynomial growth is $\tau$-tilting finite.
\end{corollary}
\begin{proof}
Since the characteristic of the base field $\k$ is not $2$, cellularity is preserved under Morita equivalence so that we may assume that algebras are basic.

Any basic indecomposable self-injective cellular algebra of polynomial growth is symmetric by \cite[Theorem 1]{AKMW}. Since any cellular algebra has a non-singular Cartan matrix \cite[Theorem 3.7]{GL}, the assertion follows from Theorem \ref{tau-finite_PG}.
\end{proof}

\section{\texorpdfstring{$0$}{0}-Hecke algebras and \texorpdfstring{$0$}{0}-Schur algebras}\label{section-4}
In this section, we focus on 0-Hecke algebras and 0-Schur algebras. We mention that these algebras are not necessarily connected and symmetric. In particular, a characterization of $0$-Hecke algebras being symmetric is given in \cite[Proposition 4.5]{Fayers-0-hecke-alg}.

\subsection{\texorpdfstring{$0$}{0}-Hecke algebras}
We recall some background of Coxeter groups and Hecke algebras. We refer to the textbook \cite{BB-coxeter-group} for more details.

Let $W$ be a Coxeter group with the generating set $S=\{s_1,s_2,\ldots,s_n\}$, that is, the group with a presentation of the form
\[
W=\langle s_1,s_2,\ldots ,s_n \mid (s_is_j)^{m_{ij}}=1 \rangle, 
\]
where $m_{ii}=1$ and $m_{ij}\in \{2,3,4,\ldots\}$ for $i\neq j$. The pair $(W,S)$ is called a \emph{Coxeter system}. 
We assign to a Coxeter system $(W,S)$ a Coxeter diagram $\Gamma(W,S)$ as follows: the vertex set of $\Gamma(W,S)$ is in bijection with $S$; two vertices $s_i$ and $s_j$ of $\Gamma(W,S)$ are joined by an edge whenever $m_{ij}\geq 3$, then this edge is labeled by $m_{ij}$ and we omit the label on the edge $\xymatrix@C=0.7cm{s_i\ar@{-}[r]&s_j}$ if $m_{ij}=3$. 
A Coxeter system $(W,S)$ is said to be \emph{irreducible} if its Coxeter diagram $\Gamma(W,S)$ is connected. 
A Coxeter system $(W,S)$ is said to be \emph{finite} if $\Gamma(W,S)$ is a disjoint union of types $\mathbb{A}_n~(n\geq 1)$, $\mathbb{B}_n=\mathbb{C}_n ~(n\geq 2)$, $\mathbb{D}_n~(n\geq 4)$, $\mathbb{E}_6$, $\mathbb{E}_7$, $\mathbb{E}_8$, $\mathbb{F}_4$, $\mathbb{H}_3$, $\mathbb{H}_4$ and $\mathbb{I}_2(m)$ $(m\geq 5)$; this gives a complete classification of finite Coxeter groups, see \cite{coxeter}. 

Let $(W, S)$ be a finite Coxeter system and $q$ an indeterminate element.
The \emph{Iwahori-Hecke algebra} (or briefly, Hecke algebra) $\H_{\k,q}(W)$ is the $\k$-algebra generated by $\{T_{s} \mid s\in S\}$ with the following two kinds of relations:
\begin{itemize}
\item Quadratic relations: $T_{s}^2=(q-1)T_{s}+q$, for any $s\in S$.
\item Braid relations: $\underset{m_{st}\ \text{factors}}{\underbrace{T_sT_tT_sT_t\cdots}}=\underset{m_{ts}\ \text{factors}}{\underbrace{T_tT_sT_tT_s\cdots}}$, for any $s,t \in S$.
\end{itemize}
It is well-known that if $w=s_1s_2\cdots s_r$ is a reduced expression of $w\in W$, then $T_w=T_{s_1}T_{s_2}\cdots T_{s_r}$ is well-defined. The Hecke algebra $\H_{\k,q}(W)$ is a $q$-deformation of the group algebra $\k W$. Indeed, $\H_{\k,1}(W)\simeq \k W$ if $q=1$. Another natural specialization of $\H_{\k,q}(W)$ is the case $q=0$. We call $\H_0(W):=\H_{\k,0}(W)$ the \emph{$0$-Hecke algebra}.

Our aim in this subsection is to classify $\H_0(W)$ by $\tau$-tilting finiteness. To do this, it suffices to check the quiver of $\H_0(W)$, say $Q_W$, as we shall explain below.

Let $(W,S)$ be a finite Coxeter system with $S=\{s_i\ |\ i\in I\}$ and $I=\{1,2,\ldots,n \}$.
According to \cite[Theorem 4.21]{Norton-0-hecke-alg} and \cite[Theorem 5.1]{Fayers-0-hecke-alg}, the quiver $Q_W$ of $\H_0(W)$ is obtained as follows:
\begin{itemize}
\item The vertex set is $\{ v_J\mid J\subseteq I\}$.
\item We draw an arrow $v_J\to v_{K}$ whenever neither $J$ nor $K$ is contained in the other, and $m_{jk}\geq 3$, for any $j\in J\setminus K$ and $k\in K\setminus J$.
\end{itemize}
Obviously, $v_{\varnothing}$ and $v_I$ are two isolated vertices in $Q_W$, that is, there is no arrow starting or ending at $v_{\varnothing}$ and $v_I$.

\begin{example}\label{example-0-Hecke}
Let $W=\S_3$ be the symmetric group of degree 3 and $S=\{s_1,s_2,s_3\}$. Then, the Coxeter diagram of $(\S_3,S)$ is displayed as
\[ 
\xymatrix@C=0.8cm{s_1\ar@{-}[r]&s_2\ar@{-}[r]&s_3}
\]
and $(m_{ij})_{i,j\in \{1,2,3\}}$ is given by
\[ 
\begin{pmatrix}
2 &3&2\\
3&2&3\\
2&3&2
\end{pmatrix}.
\]
Therefore, the quiver $Q_W$ consists of the following three components.
\[
\vcenter{\xymatrix@C=1cm@R=0.7cm{v_\varnothing &v_{\{1\}}\ar@<0.5ex>[r]^{\ } &v_{\{2\}}\ar@<0.5ex>[l]^{\ }\ar@<0.5ex>[r]^{\ } \ar@<0.5ex>[d]^{\ }   &v_{\{3\}}\ar@<0.5ex>[l]^{\ } & v_{\{1,2,3\}}\\
&v_{\{1,2\}}\ar@<0.5ex>[r]^{\ } &v_{\{1,3\}}\ar@<0.5ex>[r]^{\ } \ar@<0.5ex>[l]^{\ } \ar@<0.5ex>[u]^{\ } &v_{\{2,3\}}\ar@<0.5ex>[l]^{\ } &}}\]
\end{example}

The following observations effectively show the $\tau$-tilting finiteness of algebras.
Indeed, the following lemmas allow us to prove that a bound quiver algebra $A\simeq \k Q/\I$ is $\tau$-tilting infinite only by the shape of $Q$.
\begin{lemma}[{\cite[Lemma 2.15]{W2}}]\label{tau-tilt-infinite-quiver}
Any bound quiver algebra $A=\k Q/\I$ is $\tau$-tilting infinite if $Q$ is $\Delta_1$ or $\Delta_2$ in the following. 
\begin{center}
$\Delta_1=\vcenter{\xymatrix@C=1cm@R=0.7cm{ \circ \ar@<0.5ex>[d]^{}\ar@<0.5ex>[r]^{}&\circ  \ar@<0.5ex>[l]^{}\ar@<0.5ex>[d]^{}\\ \circ  \ar@<0.5ex>[u]^{}\ar@<0.5ex>[r]^{ }&\circ  \ar@<0.5ex>[u]^{ }\ar@<0.5ex>[l]^{}}}\qquad $ 
$\Delta_2=\vcenter{\xymatrix@C=1cm@R=0.7cm{ \circ \ar@<0.5ex>[r]^{}&\circ  \ar@<0.5ex>[l]^{}\ar@<0.5ex>[r]^{}\ar@<0.5ex>[d]^{}&\circ\ar@<0.5ex>[l]^{}\\ \circ   \ar@<0.5ex>[r]^{ }&\circ  \ar@<0.5ex>[u]^{ }\ar@<0.5ex>[r]^{}\ar@<0.5ex>[l]^{}&\circ\ar@<0.5ex>[l]^{}}}$ 
\end{center}
\end{lemma}

\begin{lemma}[{\cite[Proposition 3.1]{MW}}]\label{tau-tilting-infinite-tensor-product}
Let $A:=\k (\xymatrix{\circ\ar@<0.5ex>[r]^{\alpha} & \circ \ar@<0.5ex>[l]^{\beta}})/\langle\alpha\beta,\beta\alpha\rangle$. Then, the tensor product algebra $A\otimes_{\k } A$  is $\tau$-tilting infinite.
\end{lemma}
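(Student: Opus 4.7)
The plan is to reduce the claim directly to Lemma \ref{tau-tilt-infinite-quiver} by computing the Gabriel quiver of $A \otimes_{\mathsf{k}} A$ and showing it coincides with the quiver $\Delta_1$. Since Lemma \ref{tau-tilt-infinite-quiver} asserts that \emph{any} bound quiver algebra with underlying quiver $\Delta_1$ is $\tau$-tilting infinite, no analysis of relations on the tensor product will be needed beyond identifying the arrows.

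First, I would observe that $A$ has two primitive idempotents $e_1, e_2$ and that $J(A)$ is spanned by $\{\alpha, \beta\}$, with $J(A)^2 = 0$ by virtue of the relations $\alpha\beta = \beta\alpha = 0$. Then for the tensor product I would use
\[
J(A\otimes_{\mathsf{k}} A) \;=\; J(A)\otimes A + A\otimes J(A),
\qquad
J(A\otimes_{\mathsf{k}} A)^2 \;=\; J(A)\otimes J(A),
\]
where the second equality is a direct consequence of $J(A)^2 = 0$. Consequently $J/J^2$ is $8$-dimensional with basis $\{\alpha\otimes e_i,\ \beta\otimes e_i,\ e_j\otimes\alpha,\ e_j\otimes\beta : i,j\in\{1,2\}\}$.

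Tracking sources and targets via $\alpha = e_2\alpha e_1$ and $\beta = e_1\beta e_2$, each of these eight residue classes becomes an arrow in the Gabriel quiver of $A\otimes_{\mathsf{k}} A$: the four vertices are $(i,j)$ for $i,j\in\{1,2\}$, and there is one arrow in each direction between the pairs $\{(1,1),(2,1)\}$, $\{(1,2),(2,2)\}$, $\{(1,1),(1,2)\}$, and $\{(2,1),(2,2)\}$. This is precisely the quiver $\Delta_1$ appearing in Lemma \ref{tau-tilt-infinite-quiver}, so that lemma closes the argument. I do not anticipate any serious obstacle; the only step needing care is the radical-square computation, which must be accurate so that no stray arrows are introduced or omitted.
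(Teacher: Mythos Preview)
Your argument is correct. The paper does not supply its own proof of this lemma; it simply cites \cite[Proposition 3.1]{MW}. What you have written is a clean, self-contained derivation using only the preceding Lemma~\ref{tau-tilt-infinite-quiver}, and the details check out: since $J(A)^2=0$, the radical of the tensor product satisfies $J(A\otimes_{\mathsf{k}}A)^2 = J(A)\otimes J(A)$, so $J/J^2$ is $8$-dimensional with the basis you describe, and tracking the idempotents shows the Gabriel quiver of $A\otimes_{\mathsf{k}}A$ is exactly the square $\Delta_1$. One small point worth making explicit is that $A\otimes_{\mathsf{k}}A$ is basic (all four simple modules $S_i\otimes S_j$ are one-dimensional over the algebraically closed field $\mathsf{k}$), so it genuinely is of the form $\mathsf{k}\Delta_1/\mathcal{I}$ and Lemma~\ref{tau-tilt-infinite-quiver} applies directly. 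Your orientation convention $\alpha = e_2\alpha e_1$ may or may not match the paper's, but this is immaterial since the resulting quiver $\Delta_1$ is invariant under relabelling and arrow reversal.
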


We now formulate our main result in this subsection as follows.
\begin{theorem}\label{result-0-Hecke}
Let $W$ be an irreducible finite Coxeter group. Then, the following statements are equivalent.
\begin{enumerate}
\item[\rm{(1)}] The $0$-Hecke algebra $\H_0(W)$ is $\tau$-tilting finite.
\item[\rm{(2)}] $W$ is of type $\mathbb{A}_1$, $\mathbb{A}_2$, $\mathbb{B}_2$ or $\mathbb{I}_2(m)$ for $m\geq 5$.
\end{enumerate}
\end{theorem}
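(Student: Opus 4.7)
The plan is to analyse $\mathcal{H}_0(W)$ via its Gabriel quiver $Q_W$ described above, and then invoke the reduction lemmas for both directions of the biconditional.

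For (2) $\Rightarrow$ (1), the case $W = A_1$ is trivial since $\mathcal{H}_0(A_1) \cong \mathsf{k}[T]/(T(T+1)) \cong \mathsf{k} \times \mathsf{k}$ is semisimple. For $W$ of type $A_2$, $B_2$, or $I_2(m)$ with $m \geq 5$, the set $S$ has exactly two elements, so $Q_W$ consists of the two isolated vertices $v_\emptyset, v_{\{1,2\}}$ together with the pair $v_{\{1\}}, v_{\{2\}}$ joined by two oppositely-directed arrows. A direct inspection of the $0$-Hecke relations on a dihedral group then shows that the non-trivial connected component of $\mathcal{H}_0(W)$ is a Nakayama algebra of rank two, hence representation-finite and so $\tau$-tilting finite by Example \ref{ex1}(2).

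For (1) $\Rightarrow$ (2), I must show that $\mathcal{H}_0(W)$ is $\tau$-tilting infinite whenever $W$ is of type $A_n$ ($n \geq 3$), $B_n$ ($n \geq 3$), $D_n$ ($n \geq 4$), $E_6$, $E_7$, $E_8$, $F_4$, $H_3$, or $H_4$. The key observation is that, for any $J \subseteq S$, the full subquiver of $Q_W$ on the vertex set $\{v_K \mid K \subseteq J\}$ is determined solely by the restricted Coxeter matrix $(m_{ij})_{i,j \in J}$, because the arrow criterion `$m_{jk} \geq 3$ for every $j \in K_1 \setminus K_2$ and $k \in K_2 \setminus K_1$' involves only indices in $K_1 \cup K_2 \subseteq J$. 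Hence this subquiver coincides with the analogous portion of $Q_{W_J}$ for the parabolic subgroup $W_J$. Taking $e = \sum_{K \subseteq J} e_{v_K}$, the factor algebra $\mathcal{H}_0(W)/\mathcal{H}_0(W)(1-e)\mathcal{H}_0(W)$ is exactly the bound quiver algebra supported on this induced subquiver, since every path through a vertex outside $V' = \{v_K \mid K \subseteq J\}$ is killed in the quotient.

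Since $\tau$-tilting finiteness is preserved by factor algebras, the task reduces to producing, for each bad type, a choice of $J$ and $V' \subseteq \{v_K \mid K \subseteq J\}$ for which the induced subquiver is of shape $\Delta_1$ or $\Delta_2$, and then invoking Lemma \ref{tau-tilt-infinite-quiver}. By inclusion of parabolic subgroups it suffices to handle the minimal types $A_3$, $B_3$, $H_3$, $D_4$ (indeed $F_4 \supset B_3$, $E_6 \supset D_5 \supset D_4$, $E_7 \supset E_6$, $E_8 \supset E_7$, $H_4 \supset H_3$, and the larger $A_n, B_n, D_n$ all contain the relevant rank-three case). For $A_3$, $B_3$, $H_3$ one takes $V'$ to be the six non-isolated vertices of $Q_W$, and a direct calculation along the lines of Example \ref{example-0-Hecke} shows that the induced subquiver is precisely $\Delta_2$. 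For $D_4$ one picks $J \subseteq S$ whose induced Coxeter subdiagram is of type $A_3$ and repeats the argument. The main obstacle will be verifying, in each of the minimal cases, that no extra arrows appear in the induced subquiver beyond those of $\Delta_2$; this is a finite but careful check using the arrow criterion together with the Coxeter matrix of the corresponding minimal diagram.
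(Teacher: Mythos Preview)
Your proposal is correct and follows the same strategy as the paper: rank $\leq 2$ gives a simple enough quiver for representation-finiteness, while rank $\geq 3$ yields $\Delta_2$ as a full subquiver of $Q_W$ so that Lemma~\ref{tau-tilt-infinite-quiver} applies. The paper is slightly more direct on both halves: for $(2)\Rightarrow(1)$ it cites \cite{BG} that \emph{any} finite-dimensional algebra with quiver $\bullet\rightleftarrows\bullet$ is representation-finite (so your Nakayama claim, while plausible, is not needed), and for $(1)\Rightarrow(2)$ it observes uniformly that every listed Coxeter diagram contains a three-vertex path as a subgraph, so the six vertices $v_{\{1\}},v_{\{2\}},v_{\{3\}},v_{\{1,2\}},v_{\{1,3\}},v_{\{2,3\}}$ already span $\Delta_2$ inside $Q_W$ itself---your detour through parabolic factor algebras and the separate minimal cases $A_3,B_3,H_3,D_4$ can be collapsed into that single remark.
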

\begin{proof}
We first show that (2) implies (1). $\H_0(\S_1)$ is $\tau$-tilting finite since it is semisimple.
Suppose that $W$ is of type $\mathbb{A}_2$, $\mathbb{B}_2$ or $\mathbb{I}_2(m)$. Then the quiver $Q_W$ is given by
\[ 
\xymatrix@C=1cm@R=0.7cm{v_\varnothing &v_{\{1\}}\ar@<0.5ex>[r]^{\ } &v_{\{2\}}\ar@<0.5ex>[l]^{\ } & v_{\{1,2\}}}.
\]
It is known from \cite{BG} that any finite-dimensional algebra with quiver $\xymatrix@C=0.7cm{\circ\ar@<0.5ex>[r]^{\ } &\circ\ar@<0.5ex>[l]^{\ }}$ is representation-finite. We deduce that $\H_0(W)$ is representation-finite.

Suppose that $W$ is one of types $\mathbb{A}_n (n\geq 3)$, $\mathbb{B}_n=\mathbb{C}_n (n\geq 3)$, $\mathbb{D}_n (n\geq 4)$, $\mathbb{E}_6$, $\mathbb{E}_7$, $\mathbb{E}_8$, $\mathbb{F}_4$, $\mathbb{H}_3$ or $\mathbb{H}_4$.
In this case, the quiver $Q_W$ admits $\Delta_2$, which is described in Lemma \ref{tau-tilt-infinite-quiver}, as a full subquiver since the Coxeter diagram contains $\mathbb{A}_3$ as a subgraph.
We conclude that $\H_0(W)$ is $\tau$-tilting infinite.
\end{proof}

We are able to determine the $\tau$-tilting finiteness of $\H_0(W)$ for an arbitrary finite Coxeter group $W$.
If $W$ is a product of irreducible Coxeter groups $W_1, W_2, \ldots , W_n$, then $\H_0(W)$ is isomorphic to the tensor product $\H_0(W_1)\otimes \H_0(W_2)\otimes\cdots \otimes \H_0(W_n)$.
By combining Lemma \ref{tau-tilting-infinite-tensor-product} and Theorem \ref{result-0-Hecke}, we have the following result.

\begin{corollary}
Assume that $W=W_1\times W_2\times \cdots \times W_n$ is a finite Coxeter group, where $n\geq 2$ and $W_1, W_2, \ldots , W_n$ are irreducible. Then, $\H_0(W)$ is $\tau$-tilting finite if and only if one of the following conditions holds.
\begin{enumerate}
\item[\rm{(1)}] $W_1, W_2, \ldots, W_n$ are of type $\mathbb{A}_1$.
\item[\rm{(2)}] There is precisely one $W_i$ such that it is one of types $\mathbb{A}_2$, $\mathbb{B}_2$ or $\mathbb{I}_2(m)$, and the others are of type $\mathbb{A}_1$.
\end{enumerate}
\end{corollary}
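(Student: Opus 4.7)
The plan is to reduce to the irreducible case already settled in Theorem \ref{result-0-Hecke} by exploiting the tensor decomposition
\[\mathcal{H}_0(W_1\times\cdots\times W_n)\cong \mathcal{H}_0(W_1)\otimes_\mathsf{k}\cdots\otimes_\mathsf{k}\mathcal{H}_0(W_n),\]
together with the elementary observation that the quadratic relation $T^2=-T$ at $q=0$ forces $T(T+1)=0$, so that $\mathcal{H}_0(A_1)\cong \mathsf{k}\times \mathsf{k}$; consequently tensoring any $\mathsf{k}$-algebra $B$ by $\mathcal{H}_0(A_1)$ produces $B\times B$ as algebras.

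For the sufficiency I would treat both conditions directly. Under (a), $\mathcal{H}_0(W)\cong \mathsf{k}^{2^n}$ is semisimple and hence $\tau$-tilting finite; under (b), $\mathcal{H}_0(W)$ is a direct product of $2^{n-1}$ copies of $\mathcal{H}_0(W_i)$, which is $\tau$-tilting finite by Theorem \ref{result-0-Hecke} (and direct products of $\tau$-tilting finite algebras remain so, since the Gabriel quivers of the factors are disjoint and the support $\tau$-tilting poset factorises accordingly).

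For the necessity I proceed by contrapositive, splitting the failure of (a) and (b) into two cases. First, if some $W_i$ lies outside $\{A_1,A_2,B_2,I_2(m)\}$, then $\mathcal{H}_0(W_i)$ is $\tau$-tilting infinite by Theorem \ref{result-0-Hecke}. For every $j\neq i$, the Gabriel quiver $Q_{W_j}$ of $\mathcal{H}_0(W_j)$ contains the isolated vertex $v_\varnothing$, so the corresponding primitive idempotent $e^{(j)}_\varnothing$ satisfies $e^{(j)}_\varnothing\mathcal{H}_0(W_j)e^{(j)}_\varnothing\cong\mathsf{k}$; the idempotent truncation of $\mathcal{H}_0(W)$ by $1_{\mathcal{H}_0(W_i)}\otimes\bigotimes_{j\neq i}e^{(j)}_\varnothing$ is therefore isomorphic to $\mathcal{H}_0(W_i)$, and since $\tau$-tilting finiteness is preserved under idempotent truncations, $\mathcal{H}_0(W)$ is $\tau$-tilting infinite. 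Second, if every $W_i\in\{A_1,A_2,B_2,I_2(m)\}$ but at least two factors, say $W_1$ and $W_2$, are not of type $A_1$, then from the description of $Q_{W_j}$ in the proof of Theorem \ref{result-0-Hecke} the idempotent truncation $B_j:=(e^{(j)}_{\{1\}}+e^{(j)}_{\{2\}})\mathcal{H}_0(W_j)(e^{(j)}_{\{1\}}+e^{(j)}_{\{2\}})$ is a bound quiver algebra $\mathsf{k}Q/\mathcal{R}_j$ on $Q:\xymatrix{\circ\ar@<0.5ex>[r]^{\alpha}&\circ\ar@<0.5ex>[l]^{\beta}}$ with $\mathcal{R}_j$ admissible; because $\mathrm{rad}^2\mathsf{k}Q=\langle\alpha\beta,\beta\alpha\rangle$, the algebra $A$ of Lemma \ref{tau-tilting-infinite-tensor-product} is a quotient of $B_j$. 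Truncating $\mathcal{H}_0(W)$ further by $\bigotimes_{j\geq 3}e^{(j)}_\varnothing$ yields $B_1\otimes_\mathsf{k} B_2$, which surjects onto $A\otimes_\mathsf{k} A$; the latter is $\tau$-tilting infinite by Lemma \ref{tau-tilting-infinite-tensor-product}, so preservation of $\tau$-tilting finiteness under both quotients and truncations propagates $\tau$-tilting infiniteness back to $\mathcal{H}_0(W)$.

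The main potential obstacle I anticipate is the structural identification of $B_j$ in the second case as an admissible-ideal quotient of $\mathsf{k}Q$: this relies on $\mathcal{H}_0(W_j)$ being basic (which holds since its simples, indexed by subsets of $S$, are all one-dimensional) so that the Gabriel quiver faithfully presents it, and on admissibility forcing $\mathcal{R}_j\subseteq \mathrm{rad}^2\mathsf{k}Q$. Once these structural facts are secured, the remainder is routine bookkeeping with tensor factorisations and idempotents, combined with Lemma \ref{tau-tilting-infinite-tensor-product} and the two standard preservation properties of $\tau$-tilting finiteness.
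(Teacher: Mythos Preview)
Your proposal is correct and follows essentially the same approach as the paper: the paper simply records the tensor decomposition $\mathcal{H}_0(W)\cong\mathcal{H}_0(W_1)\otimes\cdots\otimes\mathcal{H}_0(W_n)$ and states that the corollary follows by combining Theorem \ref{result-0-Hecke} and Lemma \ref{tau-tilting-infinite-tensor-product}, giving no further details. Your argument supplies exactly the details one would naturally fill in---the reduction via the semisimple factor $\mathcal{H}_0(A_1)\cong\mathsf{k}\times\mathsf{k}$, the idempotent truncation using the isolated vertex $v_\varnothing$, and the passage to $A\otimes_\mathsf{k} A$ in the two-rank-$2$-factors case---and the structural identification of $B_j$ you flag as a concern is unproblematic, since $v_{\{1\}}$ and $v_{\{2\}}$ form an entire connected component of $Q_{W_j}$, so $B_j$ is literally a block of the basic algebra $\mathcal{H}_0(W_j)$.
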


Thanks to the results given by Deng and Yang in \cite{DY-rep-type-0-Hecke-alg}, we can give a new class of algebras in which $\tau$-tilting finiteness and representation-finiteness are equivalent.

\begin{corollary}\label{repfin=taufin1}
Let $W$ be an arbitrary finite Coxeter group. Then, the $0$-Hecke algebra $\H_0(W)$ is $\tau$-tilting finite if and only if it is representation-finite.
\end{corollary}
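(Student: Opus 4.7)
The proof reduces to comparing two classification lists. The easy direction, representation-finite implies $\tau$-tilting finite, was already recorded in Example~\ref{ex1}(2): a representation-finite algebra has only finitely many isomorphism classes of indecomposable modules, so \emph{a fortiori} only finitely many isoclasses of $\tau$-rigid modules, and is therefore $\tau$-tilting finite by the characterisation of \cite{DIJ} recalled in Section~\ref{tau-fin-alg}.

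For the converse, write $W = W_1\times W_2\times\cdots\times W_n$ as a product of irreducible finite Coxeter groups, which yields the tensor decomposition
\[ \mathcal{H}_0(W) \;\cong\; \mathcal{H}_0(W_1)\otimes_{\mathsf{k}}\mathcal{H}_0(W_2)\otimes_{\mathsf{k}}\cdots\otimes_{\mathsf{k}}\mathcal{H}_0(W_n). \]
The preceding corollary pins down exactly which such tuples $(W_1,\ldots,W_n)$ yield a $\tau$-tilting finite $\mathcal{H}_0(W)$: either every $W_i$ is of type $A_1$, or precisely one $W_i$ is of type $A_2$, $B_2$ or $I_2(m)$ with $m\geq 5$ and the remaining factors are of type $A_1$. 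It therefore suffices to verify that in each of these cases $\mathcal{H}_0(W)$ is in fact representation-finite.

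To close the loop I would invoke the classification of representation type of $0$-Hecke algebras of finite Coxeter groups due to Deng and Yang in \cite{DY-rep-type-0-Hecke-alg} and check that their list of representation-finite $\mathcal{H}_0(W)$ matches cases (a) and (b) of the preceding corollary term-by-term. The main obstacle is purely bookkeeping: all of the substantial representation-theoretic content is already packaged into Theorem~\ref{result-0-Hecke}, Lemmas~\ref{tau-tilt-infinite-quiver} and \ref{tau-tilting-infinite-tensor-product} on the $\tau$-tilting side, and into the Deng--Yang classification on the representation-type side, so the corollary follows from matching the two resulting lists.
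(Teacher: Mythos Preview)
Your argument is correct and matches the paper's approach: the paper does not supply a formal proof but simply precedes the corollary with a sentence pointing to Deng--Yang \cite{DY-rep-type-0-Hecke-alg}, the implicit content being exactly the comparison you spell out between the $\tau$-tilting classification (Theorem~\ref{result-0-Hecke} and its corollary) and the Deng--Yang representation-type classification. Your write-up is a faithful unpacking of that one-line citation.
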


\subsection{\texorpdfstring{$0$}{0}-Schur algebras}
In this subsection, we consider the finite Coxeter group of type $\mathbb{A}_{r-1}$, i.e., the symmetric group $\S_r$.
We write $\H_q(\S_r)$ for the Hecke algebra $\H_{\k,q}(\S_r)$.
We recall the definition of $q$-Schur algebras introduced by Dipper and James \cite{DJ-q-schur-alg} as follows.

Let $r$ be a positive integer.
A \emph{composition} of $r$ is a sequence $\lambda=(\lambda_1,\lambda_2,\ldots)$ of non-negative integers such that $\sum_{i\in\mathbb{N}}^{}\lambda_i=r$.
The entries $\lambda_i$, for $i\geq 1$, are called \emph{parts} of $\lambda$.
If $\lambda_i=0$ for all $i>n$, we identify $\lambda$ with $(\lambda_1,\lambda_2,\ldots,\lambda_n)$, and we call it a composition of $r$ with at most $n$ parts. We denote by $\Omega(r)$ the set of all compositions of $r$ and by $\Omega(n,r)$ the set of all compositions of $r$ with at most $n$ parts.

For $\lambda=(\lambda_1,\lambda_2,\ldots,\lambda_n)\in \Omega(n, r)$, the corresponding \emph{Young subgroup} $\S_\lambda$ of the symmetric group $\S_r$ is defined as $\S_\lambda= \S_{\lambda_1}\times \S_{\lambda_2}\times \cdots \times \S_{\lambda_{n}}$.
Then, the \emph{$q$-Schur algebra} associated with $\H_q(\S_r)$ is the endomorphism algebra
\[ S_q(n,r):=\End_{\H_q(\S_r)}\left ( \underset{\lambda\in \Omega(n,r)}{\bigoplus} x_\lambda \H_q(\S_r) \right ), \]
where $x_\lambda=\sum_{w\in \S_\lambda}^{}T_w$. We call the degeneration at $q=0$ of $S_q(n,r)$ the \emph{$0$-Schur algebra}, and denote it by $S_0(n,r)$. When $q=1$, the algebra $S_{1}(n,r)$ is the so-called classical Schur algebra. 

For $J\subset \{1,2,\ldots, r-1\}$, let $e_J$ be the idempotent of $\H_0(\S_r)$ associated with the vertex $v_J\in Q_{\S_r}$. Then, $\{e_J\mid J\subseteq \{1,2,\ldots,r-1\}\}$ is a complete set of primitive orthogonal idempotents of $\H_0(\S_r)$ as we mentioned in the previous subsection. Let $|J|$ be the number of elements in $J$.

\begin{lemma}[{\cite[(4.3.3)]{DY-0-Schur-alg}}]\label{0-schur-alg-reduction}
For any $n\geq 2$ and $r\geq 2$, the $0$-Schur algebra $S_0(n,r)$ is Morita equivalent to $e[n]\H_0(\S_r)e[n]$, where
\[ 
e[n]=\ssum_{|J|\leq n-1}^{}e_J. 
\]
\end{lemma}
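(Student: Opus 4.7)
The plan is to establish the Morita equivalence by showing that the defining projective modules of the two endomorphism algebras have the same additive closure. Set $A := \mathcal{H}_0(\mathfrak{S}_r)$, let $M := \bigoplus_{\lambda \in \Omega(n,r)} x_\lambda A$ so that $S_0(n,r) = \mathsf{End}_A(M)$ by definition, and let $N := e[n] A = \bigoplus_{|J| \leq n - 1} e_J A$ so that $\mathsf{End}_A(N) \cong e[n] A e[n]$. Since the endomorphism algebras of two projective modules with the same additive closure are Morita equivalent, it suffices to prove $\mathsf{add}(M) = \mathsf{add}(N)$ inside $\mathsf{mod}\text{-}A$.

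First, I would verify that each $x_\lambda A$ is projective. Using the quadratic relation $T_s^2 = -T_s$ at $q = 0$, a direct computation gives $x_\lambda T_s = 0$ for every simple reflection $s$ belonging to the Young subgroup $\mathfrak{S}_\lambda$, and induction on the length of reduced expressions extends this to $x_\lambda T_w = 0$ for every $w \neq e$ in $\mathfrak{S}_\lambda$. Consequently $x_\lambda^2 = x_\lambda \cdot T_e = x_\lambda$, so $x_\lambda$ is an idempotent and $x_\lambda A$ is a projective summand of $A$.

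The heart of the argument is the identification of the indecomposable projective summands of $x_\lambda A$. For $\lambda \in \Omega(n,r)$ with $n'$ nonzero parts, let $K_\lambda \subseteq S = \{s_1, \ldots, s_{r-1}\}$ denote the set of simple reflections generating $\mathfrak{S}_\lambda$, so that $|K_\lambda| = r - n'$. The key claim, which I would prove using a Mackey-type decomposition of permutation modules for the $0$-Hecke algebra together with Norton's description of the projectives $e_J A$ from \cite{Norton-0-hecke-alg}, is that $x_\lambda A \simeq \bigoplus_{J \subseteq S \setminus K_\lambda} e_J A$. As $\lambda$ ranges over $\Omega(n,r)$, the subsets $K_\lambda$ range over all subsets of $S$ with cardinality between $r - n$ and $r - 1$; therefore taking the union of the indecomposable summands over all such $\lambda$ gives precisely $\{e_J A : |J| \leq n - 1\}$, since for a fixed $J$ one can find $\lambda \in \Omega(n,r)$ with $K_\lambda \subseteq S \setminus J$ if and only if $|S \setminus J| \geq r - n$, equivalently $|J| \leq n - 1$. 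This yields $\mathsf{add}(M) = \mathsf{add}(N)$, and the Morita equivalence $S_0(n,r) \simeq e[n] A e[n]$ follows.

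The main technical obstacle is the explicit projective decomposition $x_\lambda A \simeq \bigoplus_{J \subseteq S \setminus K_\lambda} e_J A$; this combinatorial analysis is precisely the content of \cite[(4.3.3)]{DY-0-Schur-alg}, which I would invoke rather than rederive from scratch.
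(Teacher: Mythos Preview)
The paper does not give a proof of this lemma at all: it is stated purely as a citation of \cite[(4.3.3)]{DY-0-Schur-alg}, with no argument supplied. So there is no ``paper's own proof'' to compare your proposal against.

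That said, your sketch is a faithful outline of how the cited result is obtained. The reduction to showing $\mathsf{add}(M)=\mathsf{add}(N)$, the verification that $x_\lambda$ is idempotent via $x_\lambda T_s=0$ for $s\in K_\lambda$, and the identification of which indecomposable projectives $e_JA$ occur in some $x_\lambda A$ with $\lambda\in\Omega(n,r)$ are all correct. One small imprecision: when you write that ``the subsets $K_\lambda$ range over all subsets of $S$ with cardinality between $r-n$ and $r-1$'', the lower bound should be $\max(0,r-n)$ to cover the case $n\ge r$; the conclusion is unaffected. As you yourself note at the end, the substantive step---the decomposition $x_\lambda A\simeq\bigoplus_{J\subseteq S\setminus K_\lambda}e_JA$---is exactly what is proved in \cite{DY-0-Schur-alg} (building on Norton's work), so your proposal and the paper ultimately rest on the same external reference.
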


Now, we are able to classify $\tau$-tilting finite $0$-Schur algebras as follows.
\begin{theorem}\label{main-0-schur}
The following assertions hold.
\begin{enumerate}
\item[\rm{(1)}] For any $n\geq 3$, the $0$-Schur algebra $S_0(n,r)$ is $\tau$-tilting finite if and only if $r=2,3$.
\item[\rm{(2)}] For $n =2$ and $r\geq 2$, the $0$-Schur algebra $S_0(2,r)$ is $\tau$-tilting finite.
\end{enumerate}
\end{theorem}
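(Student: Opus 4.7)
The plan is to reduce each case to the algebra $B_{n,r} := e[n]\mathcal{H}_0(\mathfrak{S}_r)e[n]$ via Lemma \ref{0-schur-alg-reduction} and then exploit the explicit description of the Gabriel quiver $Q_{\mathfrak{S}_r}$ recalled before Example \ref{example-0-Hecke}: its vertices are the subsets $J\subseteq\{1,\dots,r-1\}$, and $e[n]=\sum_{|J|\le n-1}e_J$.

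For the ``if'' direction of (1), suppose $n\ge 3$ and $r\le 3$. Every subset of $\{1,\dots,r-1\}$ then has size $\le 2\le n-1$, forcing $e[n]=1$. Hence $S_0(n,r)\simeq\mathcal{H}_0(\mathfrak{S}_r)$, and Theorem \ref{result-0-Hecke} gives $\tau$-tilting finiteness since the type is $A_1$ or $A_2$.

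For the ``only if'' direction of (1), suppose $n\ge 3$ and $r\ge 4$. I would exhibit a $\Delta_2$-shaped full subquiver of $Q_{\mathfrak{S}_r}$ on the six subsets $\{1\}, \{2\}, \{3\}, \{1,2\}, \{1,3\}, \{2,3\}$, all of size $\le 2\le n-1$ and so picked out by $e[n]$. Using $m_{i,i+1}=3$ and $m_{i,j}=2$ for $|i-j|\ge 2$, the adjacency rule yields exactly the two horizontal chains $v_{\{1\}}\leftrightarrow v_{\{2\}}\leftrightarrow v_{\{3\}}$ and $v_{\{1,2\}}\leftrightarrow v_{\{1,3\}}\leftrightarrow v_{\{2,3\}}$ together with the single cross-arrow $v_{\{2\}}\leftrightarrow v_{\{1,3\}}$ (the other two potential cross-arrows are ruled out by $m_{1,3}=2$). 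Letting $e'$ be the sum of these six primitive idempotents, the algebra $e'\mathcal{H}_0(\mathfrak{S}_r)e'$ has Gabriel quiver $\Delta_2$, possibly after a further quotient killing any shortcut arrows through the omitted vertices. Lemma \ref{tau-tilt-infinite-quiver} then forces $\tau$-tilting infiniteness, and since idempotent truncation and taking quotients both preserve $\tau$-tilting finiteness, $S_0(n,r)$ is itself $\tau$-tilting infinite.

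For (2), where $n=2$ and $r\ge 2$, the idempotent $e[2]$ picks out $v_\emptyset$ together with all singletons $v_{\{i\}}$, and the induced subquiver of $Q_{\mathfrak{S}_r}$ is an isolated vertex $v_\emptyset$ together with the linear double-arrow chain $v_{\{1\}}\leftrightarrow v_{\{2\}}\leftrightarrow\cdots\leftrightarrow v_{\{r-1\}}$, since $m_{i,j}\ge 3$ only when $|i-j|=1$. I would then establish representation-finiteness of $S_0(2,r)$, either by invoking the classification of representation-finite $0$-Schur algebras of Deng--Yang \cite{DY-0-Schur-alg}, or by reading off the relations inherited from $\mathcal{H}_0(\mathfrak{S}_r)$ (using the Norton--Fayers description \cite{Norton-0-hecke-alg, Fayers-0-hecke-alg}) and identifying $S_0(2,r)$ with a Nakayama- or gentle-type algebra of finite representation type. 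Example \ref{ex1}(2) then yields $\tau$-tilting finiteness. The main obstacle across both parts is certifying that the Gabriel quiver of each truncated algebra matches the induced subquiver of $Q_{\mathfrak{S}_r}$, i.e., that no spurious arrows appear through omitted vertices; this is cosmetic for (1) (extra arrows can be killed without changing the $\tau$-tilting class) but unavoidable for (2), where the explicit shape of the quiver is needed to certify representation-finiteness.
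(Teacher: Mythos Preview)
Your treatment of part (1) is essentially the paper's argument, carried out with a bit more care about how idempotent truncation interacts with the Gabriel quiver. The paper simply asserts that the quiver of $e[n]\mathcal{H}_0(\mathfrak{S}_r)e[n]$ contains $\Delta_2$ as a full subquiver and invokes Lemma~\ref{tau-tilt-infinite-quiver}; your extra step of passing to a further quotient to kill possible shortcut arrows is a reasonable precaution and does no harm.

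Part (2), however, has a genuine gap: your strategy is to prove that $S_0(2,r)$ is representation-finite, but this is false for $r\ge 6$. By \cite[Section~5]{DY-0-Schur-alg}, $S_0(2,r)$ is Morita equivalent to the preprojective algebra $\Pi(\mathbb{A}_{r-1})$, and preprojective algebras of Dynkin type $\mathbb{A}_n$ are representation-finite only for $n\le 4$; they are tame for $n=5$ and wild for $n\ge 6$. In particular, the Deng--Yang classification will not tell you that $S_0(2,r)$ is representation-finite, and the algebra is neither Nakayama nor gentle: the quiver is the double $\mathbb{A}_{r-1}$ quiver, and the defining relations $\beta_i\alpha_i=\alpha_{i+1}\beta_{i+1}$ are non-monomial commutativity relations. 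This is precisely why Corollary~\ref{repfin=taufin2} is stated only for $n\ge 3$. The paper instead cites Mizuno's theorem \cite[Theorem~2.21]{Miz1} that preprojective algebras of Dynkin type are $\tau$-tilting finite (with $|\mathsf{s}\tau\text{-}\mathsf{tilt}\,\Pi|=|W|$), which bypasses representation type entirely; you need an argument of this kind, since Example~\ref{ex1}(2) is unavailable here.
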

\begin{proof} 
(1) We show the necessity.
When $r\leq n$, $S_0(n,r)$ is $\tau$-tilting finite if and only if $\H_0(\S_r)$ is $\tau$-tilting finite by Lemma \ref{0-schur-alg-reduction}, and so is $S_0(n,r)$ with $r\leq 3$.

We now show the sufficiency.
Suppose $r\geq 4$.
In this case, the quiver of $e[n]\H_0(\S_r)e[n]$ admits the following full subquiver:
\[ 
\xymatrix@C=1cm{ v_{\{1,2\}} \ar@<0.5ex>[r]^{} &v_{\{1,3\}}  \ar@<0.5ex>[d]^{}\ar@<0.5ex>[r]^{}\ar@<0.5ex>[l]^{}&v_{\{2,3\}}   \ar@<0.5ex>[l]^{}\\ v_{\{1\}} \ar@<0.5ex>[r]^{} &v_{\{2\}}   \ar@<0.5ex>[u]^{}\ar@<0.5ex>[l]^{}\ar@<0.5ex>[r]^{ }&v_{\{3\}}  \ar@<0.5ex>[l]^{}} 
\]
Thus, $S_0(n,r)$ is $\tau$-tilting infinite by Lemma \ref{tau-tilt-infinite-quiver}.

(2) It is known from \cite[Section 5]{DY-0-Schur-alg} that, for any $r\geq 2$, the 0-Schur algebra $S_0(2,r)$ is Morita equivalent to the preprojective algebra of Dynkin type $\mathbb{A}_{r-1}$, which is $\tau$-tilting finite following \cite[Theorem 2.21]{Miz1}.
\end{proof}

We mention that the representation type of $S_0(n,r)$ is determined in \cite{DY-0-Schur-alg}. Then, the relation between representation finiteness and $\tau$-tilting finiteness of $0$-Schur algebras is displayed as follows.

\begin{corollary}\label{repfin=taufin2}
Let $n\geq 3$. Then, the $0$-Schur algebra $S_0(n,r)$ is $\tau$-tilting finite if and only if it is representation-finite.
\end{corollary}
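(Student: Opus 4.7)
The corollary is an immediate consequence of results already established (in the paper and in the cited literature), so the proof should be brief. The easy direction, that representation-finiteness implies $\tau$-tilting finiteness, is a general fact recorded in Example \ref{ex1}(2); I would simply cite it.

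For the converse, the plan is to combine Theorem \ref{main-0-schur}(1) with the Morita reduction of Lemma \ref{0-schur-alg-reduction}. Theorem \ref{main-0-schur}(1) shows that for $n \geq 3$, $\tau$-tilting finiteness of $S_0(n,r)$ is equivalent to $r \leq 3$. So it remains to verify that $S_0(n,r)$ is representation-finite whenever $n \geq 3$ and $r \leq 3$. In this range, $r \leq n$, so every subset $J \subseteq \{1,\ldots,r-1\}$ satisfies $|J| \leq r-1 \leq n-1$. Consequently the idempotent $e[n] = \sum_{|J| \leq n-1} e_J$ from Lemma \ref{0-schur-alg-reduction} equals the identity of $\mathcal{H}_0(\mathfrak{S}_r)$, and Lemma \ref{0-schur-alg-reduction} yields a Morita equivalence $S_0(n,r) \simeq \mathcal{H}_0(\mathfrak{S}_r)$.

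Hence it suffices to note that $\mathcal{H}_0(\mathfrak{S}_r)$ is representation-finite for $r = 1, 2, 3$. This was already observed inside the proof of Theorem \ref{result-0-Hecke}: for $r = 1, 2$ the algebra is semisimple, and for $r = 3$ the Gabriel quiver decomposes into isolated vertices together with a component of the Kronecker-type shape $\xymatrix{\circ \ar@<0.5ex>[r] & \circ \ar@<0.5ex>[l]}$, and any bound quiver algebra on this quiver is representation-finite by \cite{BG}. Alternatively, one can invoke the classification of representation types of $S_0(n,r)$ from \cite{DY-0-Schur-alg} cited in the paragraph preceding the corollary, which identifies the representation-finite cases as precisely those with $r \leq 3$ (for $n \geq 3$).

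I do not anticipate any serious obstacle: the statement is essentially a tautology once Theorem \ref{main-0-schur}(1) is in hand, and the only point requiring minimal attention is that the idempotent truncation in Lemma \ref{0-schur-alg-reduction} collapses to a Morita equivalence in the regime $r \leq n$, so that $\tau$-tilting finiteness and representation-finiteness match up on the nose.
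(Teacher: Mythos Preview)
Your proposal is correct and essentially matches the paper's approach: the paper's proof is the one-line ``Combine Theorem \ref{main-0-schur} and \cite[Theorem 4.6]{DY-0-Schur-alg},'' which is precisely your second alternative. Your first alternative, reducing to $\mathcal{H}_0(\mathfrak{S}_r)$ via Lemma \ref{0-schur-alg-reduction} and invoking the representation-finiteness established in the proof of Theorem \ref{result-0-Hecke}, is a valid and slightly more self-contained variant that avoids importing the full classification from \cite{DY-0-Schur-alg}.
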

\begin{proof}
Combine Theorem \ref{main-0-schur} and \cite[Theorem 4.6]{DY-0-Schur-alg}.
\end{proof}

\section*{Acknowledgment}
The authors would like to express their gratitude to Professor Susumu Ariki for introducing them to this area of research and for his valuable comments and suggestions. Additionally, the authors are deeply thankful to Ryoichi Kase for his insightful suggestions, this paper would not have been complete without his comments. KM was partly supported by JSPS Grant-in-Aid for Early-Career Scientists (Grant No. 20K14302 and 24K16885), Grant-in-Aid for Scientific Research (A) (Grant No. 23H00479) and FY 2023 Research Project Expense Subsidy Program: Research Network Formation Project (National Institute of Technology, Japan). QW is partially supported by JSPS Grant-in-Aid for JSPS Fellows (Grant No. 20J10492), National Key Research and Development Program of China (Grant No. 2020YFA0713000) and China Postdoctoral Science Foundation (Grant No. YJ20220119 and No. 2023M731988).





\end{document}